\documentclass{article}
\usepackage{subcaption}
\usepackage{graphicx, xcolor} 
\usepackage{amsmath,amssymb, comment, url}
\usepackage{amsthm}
\usepackage{float} 
\usepackage[
backend=biber,
style=numeric,
sorting=none
]{biblatex}
\theoremstyle{definition}
\newtheorem{definition}{Definition}[section]
\newtheorem{theorem}{Theorem}[section]
\newtheorem{proposition}[theorem]{Proposition}
\newtheorem{lemma}[theorem]{Lemma}
\newtheorem{remark}[theorem]{Remark}
\newtheorem{assumption}[theorem]{Assumption}
\title{Characterizing resonant solitary states as relative equilibria via normal form reduction}

\author{
Bengi D\"onmez\thanks{
Department of Mathematics, Vrije Universiteit Amsterdam, Amsterdam, the Netherlands,
\texttt{b.donmez@vu.nl}}
\quad
Bob Rink\thanks{
Department of Mathematics, Vrije Universiteit Amsterdam, Amsterdam, the Netherlands,
\texttt{b.w.rink@vu.nl}}
}
\date{}
\begin{document}

\maketitle
\begin{abstract}

We investigate solitary states in coupled oscillator networks, where a single oscillator detaches from a synchronized cluster and evolves at a distinct mean frequency. We focus on resonant solitary states, in which the detached oscillator’s mean frequency is shaped by resonance with a mode of the cluster.
Here, we introduce a normal-form and symmetry-based approach to identify such states. We reduce the high-dimensional network dynamics to a two-dimensional second-order system that couples a rotator representing the solitary oscillator to a harmonic oscillator representing the dominant Laplacian mode of the cluster. We use normal-form transformations to introduce symmetry into this system. Using this symmetry, we reformulate the frequency selection problem as the search for relative equilibria of the normal form symmetry. We derive explicit algebraic conditions for the existence of these relative equilibria, and hence for resonant solitary states and their effective frequencies.
\end{abstract}
\section{Introduction}
Networks of coupled oscillators arise abundantly in physics, biology, and engineering, and Kuramoto-type models are canonical examples for studying their synchronization and desynchronization. An important dynamical phenomenon observed in such networks is the existence of solitary states~\cite{maistrenko_solitary_2014,jaros_chimera_2015}. Solitary states occur when one or a small subset of oscillators desynchronizes from the rest of the network, which remains synchronized.

The study of these states is motivated by the dynamics of modern power grids. In systems with physical inertia, solitary states can coexist with stable synchrony, creating a regime of multistability \cite{jaros_solitary_2018} where a localized perturbation can kick a single “troublemaker” node out of sync without collapsing the entire synchronized cluster~\cite{nitzbon_deciphering_2017,menck_how_2014,hellmann_network-induced_2020}. This coexistence implies that a single node can detach from the synchronized cluster and evolve with a distinct mean frequency, while the rest of the network remains synchronized. A solitary node can often support multiple stable solitary frequencies~\cite{niehues_resonant_2024,hellmann_network-induced_2020,halekotte_transient_2021}.

In this paper, we focus on a specific type of solitary state, the resonant solitary state, in which the frequency of a solitary oscillator resonates with a specific eigenmode of the synchronized cluster. The solitary oscillator can exchange energy with the synchronized cluster through resonant network modes, so this phenomenon is a product of the underlying network topology~\cite{menck_how_2014,nitzbon_deciphering_2017,zhang_vulnerability_2020,niehues_resonant_2024}. Its long-term stability and frequency are determined by how effectively its periodic forcing excites the network modes \cite{zhang_vulnerability_2020,niehues_resonant_2024}. We investigate resonant solitary states from a dynamical systems perspective. Specifically, we study second-order Kuramoto oscillators on a network and consider a minimal solitary-node configuration, where a synchronized cluster is weakly coupled to a single solitary oscillator through a single attachment node.

Our work is heavily inspired by \cite{niehues_resonant_2024}, which developed a self-consistency method based on partial linearization and averaging to identify resonant solitary frequencies in complex networks. However, that approach lacks a precise dynamical-systems characterization of these states; in particular, it does not yield rigorous conditions for the existence and stability of solutions. In contrast, this paper proves that the dynamics of the Kuramoto model with inertia can be transformed into an $ S^1$-equivariant system, in which resonant solitary states correspond to relative equilibria. We derive explicit algebraic conditions for their existence and obtain a stability criterion via transverse linearization.

A major advantage of our approach is its generality. The same argument and computation apply to weakly perturbed oscillator-rotator systems whose equations of motion take the form \begin{equation}\label{eq:main} \begin{cases} \ddot{\eta}+\omega_0^2\eta = \varepsilon F_1(\eta,\dot{\eta},\psi,\dot{\psi}) + \mathcal{O}(\varepsilon^2),\\[1mm] \dot{\psi} = \Omega_0+\varepsilon F_2(\eta,\dot{\eta},\psi,\dot{\psi})+ \mathcal{O}(\varepsilon^2), \end{cases} \end{equation}
with $0<\varepsilon\ll1$. In the above setting, $\eta$ denotes the amplitude of a network mode, while $\psi$ is the phase of the solitary oscillator. A near-identity normal-form transformation brings this system into a form of which the dynamics is $ S^1$-equivariant. More precisely, the resonant normal form preserves the $S^1$ symmetry generated by the unperturbed dynamics. Periodic solutions in this setting can then be sought as relative equilibria of the normal-form symmetry. Hence, we convert the search for periodic solutions into solving explicit algebraic equations. Resonant solitary states in the Kuramoto model with inertia provide a concrete, physically motivated application.

\begin{figure}[H] \centering \begin{subfigure}[b]{0.79\textwidth} \centering \includegraphics[width=\linewidth]{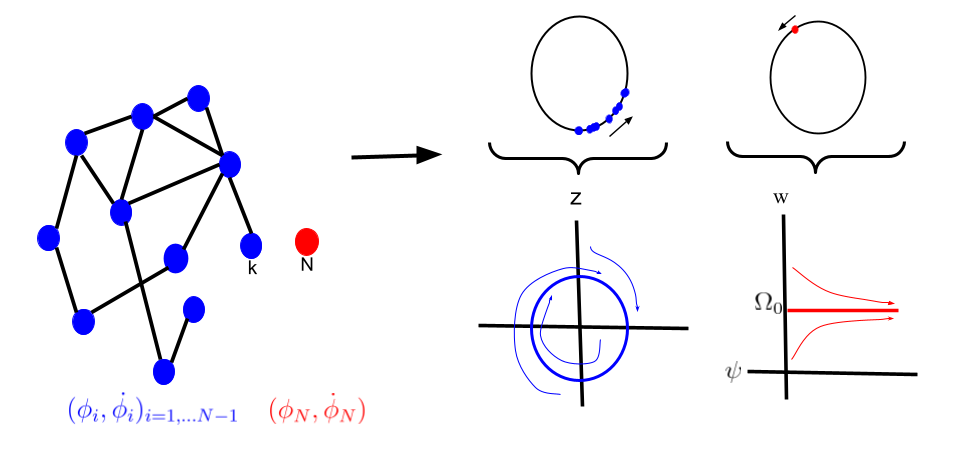} \caption{Decoupled regime.} \label{fig:decoupled-regime} \end{subfigure} \hfill \begin{subfigure}[b]{0.79\textwidth} \centering \includegraphics[width=\linewidth]{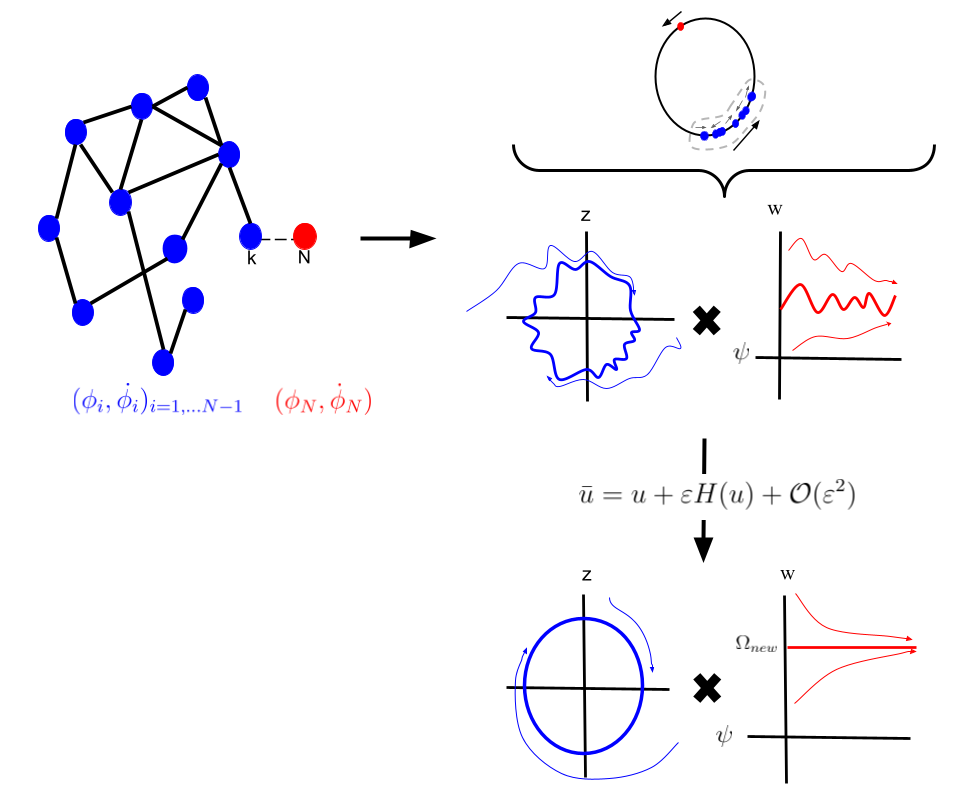} \caption{Weakly coupled regime and normal-form transformation.} \label{fig:coupled-regime} \end{subfigure} 
\caption{Schematic of the reduction from the solitary-node network to an oscillator--rotator system and its resonant normal form. We use the reduced coordinates $u=(z,\psi,w)^\top$, where $z\in\mathbb{C}$ represents the selected cluster mode, $\psi\in S^1$ is the solitary phase relative to the synchronized cluster, and $w\in\mathbb{R}$ describes the correction to the solitary angular velocity. (a) Decoupled regime. (b) Weakly coupled regime and normal-form transformation.}
\label{fig:decoupled-coupled-reduction} \end{figure}

Figure~\ref{fig:decoupled-coupled-reduction} illustrates the methods used in our analysis. Starting from the network on the left, we first reduce the dynamics near the partially synchronized state to one cluster mode coupled to a solitary oscillator. In the decoupled limit, shown in panel~(a), the selected cluster mode is a harmonic oscillator to leading order, while the solitary oscillator approaches uniform rotation. In an appropriate coordinate, the harmonic oscillator therefore traces a circle, whereas the solitary angular velocity approaches a limiting constant value $\Omega_0$. When there is weak coupling, the solitary oscillator periodically forces the selected cluster mode. The upper part of panel~(b) represents the resulting weakly perturbed oscillator-rotator dynamics. The circular motion of $z$ and the uniform solitary rotation are perturbed by the coupling. We then apply the normal-form transformation indicated by the vertical arrow. In the normal-form coordinates, the resonant terms are separated from the nonresonant perturbations. A resonant solitary state is represented by uniform rotation of $z$ together with a constant value of $w$, so that the solitary oscillator rotates with the constant angular velocity $\Omega_0+\varepsilon w$. Sections~5--7 show that this motion is a relative equilibrium of the $S^1$-equivariant normal form.

\subsubsection*{Organization of the paper}

The remainder of this paper is organized as follows. In Section~2, we introduce the Kuramoto model with inertia, describe the solitary-node configuration, and state the scaling assumptions used throughout the paper. In Section~3, we reduce the scaled network dynamics to the four-dimensional oscillator--rotator system \eqref{eq:u} by expanding around the synchronized cluster state, projecting onto Laplacian eigenmodes, and retaining a dominant resonant mode. In Section~4, we construct the first-order resonant normal form of this reduced system using near-identity transformations and a Fourier--Taylor decomposition of the equations of motion. In Section~5, we identify the $S^1$-symmetry of the normal form of system \eqref{eq:main} generated by its unperturbed flow, and prove that the truncated normal form is equivariant under this symmetry. In Section~6, we characterize resonant solitary states as relative equilibria and reduce the relative-equilibrium problem to algebraic conditions on a cross-section. In Section~7, we specialize these conditions in the near-resonant normal form of the solitary state problem and compute explicit relative equilibria together with their existence and stability conditions. Section~8 contains a conclusion and discussion. Appendix A gives the technical details of the reduction from the full $2N$-dimensional Kuramoto system to the four-dimensional oscillator-rotator system \eqref{eq:u}.

Readers primarily interested in the normal-form and symmetry analysis may proceed directly to Sections~4-6 after consulting Section~3 for the reduced system \eqref{eq:u}.

\section{Model Setup}
This section introduces the solitary oscillator problem, which will serve as the main application of the general method developed later. We begin with the Kuramoto model with inertia, describe the solitary-node configuration, and choose a scaling regime in which the internal cluster dynamics remains of order one, while damping, solitary-node input, and solitary-cluster coupling enter as a perturbation. This formulation identifies both the small parameter and the reference synchronized cluster state around which the reduction in Section~3 is carried out. The output of this section is the re-scaled model \eqref{eq:ScaledModel}, which provides the starting point for the oscillator-rotator reduction in Section~3. 
\begin{figure}[ht]
\centering
\includegraphics[width=0.35\textwidth]{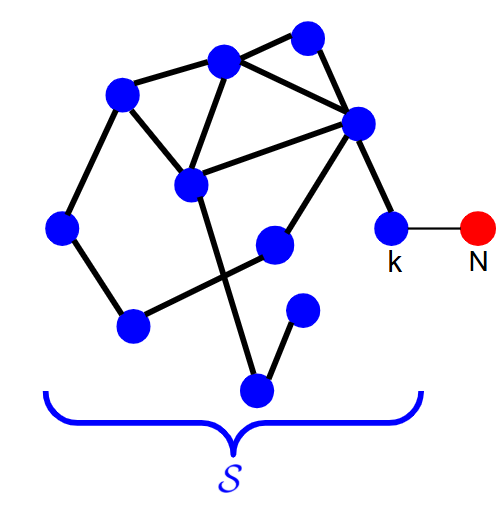}
\caption{A cluster of oscillators $S$ connected to a solitary oscillator $N$. The blue nodes form the connected cluster
$S={1,\ldots,N-1}$, while the red node $N$ is the solitary oscillator. The solitary oscillator is coupled only to the attachment node $k$ through the weak link $K_{kN}=K_{Nk}$. The black edges represent the internal couplings within the cluster.}
\label{fig:solitary-node-configuration}
\end{figure}
We consider a Kuramoto model with inertia with $N$ oscillators, where nodes $i\in \mathcal S :=\{1,\dots,N-1\}$ form a large, connected cluster and node $N$ is a solitary oscillator coupled only to a single cluster node $k$, which we call the attachment node. This is illustrated in Figure~\ref{fig:solitary-node-configuration}. The dynamics of this system is given by
\begin{align}\label{eq:InitialSystem}
&m\,\ddot{\phi}_i - P_i + \alpha\,\dot{\phi}_i
= \sum_{j=1}^{N-1} K_{ij}\,\sin(\phi_j-\phi_i)
+ \delta_{ik}\,K_{kN}\,\sin(\phi_N-\phi_k),
\quad i\in \mathcal S,\\ \nonumber
&m\,\ddot{\phi}_N - P_N + \alpha\,\dot{\phi}_N
= K_{Nk}\,\sin(\phi_k-\phi_N).
\end{align}
Here $\phi_i$ denotes the phase of the $i$th oscillator, $m>0$ denotes the common inertia, $\alpha>0$ the common damping, $P_i$ the net input (power) at node $i$, $K_{ij}=K_{ji}\geq 0$ the internal coupling strengths within the cluster, and $K_{Nk}=K_{kN}\geq 0$ the weak link between the solitary oscillator and the attachment node $k$. Moreover, $\delta_{ik}$ denotes the Kronecker delta, that is,
\begin{equation*}
\delta_{ik} =
\begin{cases}
1, & i=k,\\
0, & i\neq k.
\end{cases}
\end{equation*}
For $K_{kN}=0$, that is, when there is no connection between the solitary oscillator $N$ and the attachment node $k$, the solitary oscillator is decoupled from the cluster. We call this the {\it decoupled regime.} In this regime, we assume that the subsystem involving nodes $i \in \mathcal S$ has a stable frequency-synchronized trajectory
\begin{equation*}
\phi_i(t)=\phi_i^*+\Omega t, \qquad i\in \mathcal S.
\end{equation*}
The associated synchrony manifold is given by
\begin{equation*}\label{eq:syncManifold}
\mathcal{M}_{\mathrm{syn}}
=
\left\{
( \phi, \dot \phi) \in \mathbb{T}^{N} \times  \mathbb R^{N}
:\;
\dot \phi_i=\dot \phi_j
\text{ for all } i,j\in \mathcal S
\right\},
\end{equation*}
and we take this manifold as the reference invariant set for our reduction.

The solitary oscillator $N$ can have a different intrinsic frequency from the cluster. When $K_{kN}$ is small, the forcing by the solitary oscillator
\begin{equation*}
\delta_{ik}K_{kN}\sin(\phi_N-\phi_i)
\end{equation*}
acts as a perturbation on the synchrony manifold. Depending on the choice of parameters, this manifold may either lose stability and break down or persist as a slightly deformed invariant set. We are interested in the regime in which the synchrony manifold persists under weak coupling to the solitary node, and in particular in how this depends on the mean frequency of the solitary oscillator.

We therefore consider a scaling regime in which the internal cluster dynamics remain of order one, while damping, the solitary oscillator input, and the coupling between the solitary node and the cluster enter as small perturbations. More precisely, we assume that
\begin{equation*}\label{eq:scaling}
\hat P_i: =\frac{P_i}{m}=\mathcal{O}(1), \qquad  \kappa_{ij}:=\frac{K_{ij}}{m}=\mathcal{O}(1),
\quad i,j\in \mathcal S,
\end{equation*}
whereas
\begin{equation*}
\frac{\alpha}{m} = \varepsilon\,\hat\alpha,\qquad
\frac{P_N}{m} = \varepsilon\,\hat P_N,\qquad
\frac{K_{Nk}}{m} = \varepsilon^2\,\kappa_{Nk},
\qquad 0<\varepsilon\ll 1,
\end{equation*}
with $\hat\alpha,\hat P_N,\kappa_{Nk}=\mathcal{O}(1)$. Then \eqref{eq:InitialSystem} can be written as
\begin{align}\label{eq:ScaledModel}
\ddot{\phi}_i
&= \hat{P_i}
+ \sum_{j=1}^{N-1}\kappa_{ij}\,\sin(\phi_j-\phi_i)
- \varepsilon\,\hat\alpha\,\dot{\phi}_i
+ \varepsilon^2\,\delta_{ik}\,\kappa_{kN}\,\sin(\phi_N-\phi_i),
\quad i\in \mathcal S,\\ \nonumber
\ddot{\phi}_N
&= \varepsilon\,\hat P_N
- \varepsilon\,\hat\alpha\,\dot{\phi}_N
+ \varepsilon^2\,\kappa_{Nk}\,\sin(\phi_k-\phi_N).
\end{align}
Observe that the forcing on the cluster caused by the solitary node is only of size $\mathcal{O}(\varepsilon^2)$ compared to the $\mathcal{O}(1)$ collective coupling within the cluster. The above scaling is chosen so that the solitary oscillator still rotates with an order one mean frequency in the decoupled limit. Indeed, if $K_{Nk}=0$, then the solitary equation in \eqref{eq:ScaledModel} reduces to
\begin{equation} \label{eq:uncoupledSolitaryDynmaics}
\ddot{\phi}_N
=
\varepsilon \hat P_N
-
\varepsilon \hat\alpha\,\dot{\phi}_N,
\end{equation}
whose attracting uniform rotation satisfies
\begin{equation}\label{eq:uncoupledSolitaryFrequency}
\dot{\phi}_N \to \frac{\hat P_N}{\hat\alpha} \quad \text{as} \quad t \to \infty.
\end{equation}
This scaling in \eqref{eq:uncoupledSolitaryDynmaics} is not arbitrary. By taking both the damping and the power input of the solitary oscillator to be of order $\varepsilon$, the decoupled solitary oscillator has an order one mean angular velocity. As a consequence, the term
\begin{equation*}
\varepsilon^2\,\delta_{ik}\,\kappa_{kN}\sin(\phi_N-\phi_i)
\end{equation*}
acts on the cluster as a weak time-periodic forcing whose frequency is determined by the mean rotational frequency of the solitary oscillator. It produces a forcing of small amplitude but order one frequency, which can resonate with an eigenmode of the linearized cluster dynamics. In that resonant regime, the response of the corresponding network mode may become order one, so that the solitary oscillator can induce a persistent deformation of the synchrony manifold and generate a form of period matching between the cluster and the solitary node despite the small coupling $K_{Nk}$.

\section{Reduction to an Oscillator-Rotator Model}

This section connects the original network model to the normal-form analysis done   later. Starting from the re-scaled Kuramoto system \eqref{eq:ScaledModel}, we reduce the dynamics near the synchronized cluster state to a four-dimensional system with one harmonic oscillator and one rotator. The harmonic oscillator describes the dominant Laplacian mode of the cluster, and the rotator describes the solitary oscillator relative to the cluster. This reduced system is the system to which we apply the normal-form transformation in Section~4.

The reduction has four steps. First, we move to a co-rotating frame and perturb around the synchronized cluster state. Second, we rewrite the cluster dynamics in terms of Laplacian eigenmodes. Third, we project onto the mode that is expected to dominate near resonance with the solitary oscillator. Finally, we write the resulting second-order equations as a first-order system and introduce a complex coordinate for the cluster mode. The result is the compact system \eqref{eq:u}, which has the form needed for the normal-form and symmetry analysis. The full derivation of the reduction from the original network equations to \eqref{eq:u} is given in Appendix~A; here we only outline the main steps and introduce the reduced variables that will be used in the subsequent analysis.

\subsection{Perturbation around the synchronized cluster state in a co-rotating frame.}\label{subsec:newcoordinates}

The purpose of this step is to separate the collective rotation of the synchronized cluster from the deviations caused by the solitary oscillator.
We first expand the cluster phases around the frequency-synchronized solution by writing 
\begin{equation}\label{var:clusterperturbed}
\phi_i(t)=\phi_i^*+\Omega t+\varepsilon\,\nu_i(t), \qquad i\in \mathcal S,
\end{equation}
and introduce a co-rotating variable for the solitary oscillator:
\begin{equation}\label{var:solitarycorotating}
\psi_N(t):=\phi_N(t)-\phi_k^*-\Omega t.
\end{equation}
Substituting these expressions into \eqref{eq:ScaledModel} and expanding to $\varepsilon$ around the synchronized cluster state yields weakly coupled equations for the cluster perturbation $\nu$ and solitary variable $\psi_N$. See Appendix \ref{approximation_near_solitary_state} for the details.

\subsection{Projection onto Laplacian eigenmodes and localization in a dominant mode.}

The aim of this subsection is to identify which cluster mode is actually driven by the solitary oscillator. After linearization around the synchronized cluster state, the cluster response is naturally decomposed into Laplacian eigenmodes. This is the point where the network topology enters the reduced dynamics: the solitary oscillator can only resonantly force modes that are visible at the attachment node. The dominant-mode assumption then selects the single mode that will become the harmonic-oscillator component of the reduced oscillator-rotator system.

The linearized cluster dynamics is governed by a weighted graph Laplacian $L$, which is orthogonally diagonalizable:
\begin{equation*}
L = Q\Lambda Q^\top,
\end{equation*}
where $\Lambda=\mathrm{diag}(\lambda_1,\dots,\lambda_{N-1})$ and $Q\in\mathbb{R}^{(N-1)\times(N-1)}$ is an orthogonal matrix. We introduce the Laplacian-mode coordinates
\begin{equation*}
\eta := Q^\top \nu.
\end{equation*} We also write
\begin{equation*}
q^{(j)} := Q^\top e_j.
\end{equation*}
Thus, $q^{(j)}$ records the values of all Laplacian eigenvectors at node $j$. We then fix an index $\ell\in\{1,\dots,N-1\}$ such that
\begin{equation}\label{eq:assumptionq}
q^{(k)}_\ell \neq 0.
\end{equation}
We assume that the cluster response is dominated by this mode, in the sense that
\begin{equation*}
\nu(t)=Q\eta(t)=\eta_\ell(t)Qe_\ell+\mathcal{O}(\varepsilon).
\end{equation*}
Thus, to leading order, the perturbation of the synchronized cluster is assumed to be aligned with a single Laplacian eigenmode. This assumption is motivated by resonance. The solitary oscillator acts on the cluster as a weak periodic forcing with leading frequency $ \frac{\hat P_N}{\hat\alpha}$. Among the Laplacian modes of the linearized cluster, the mode whose natural frequency is closest to this forcing frequency gives the dominant response, provided that it is actually excited through the attachment node. Therefore, we choose an index $\ell$ that satisfies \eqref{eq:assumptionq} and whose
corresponding natural frequency $\sqrt{\lambda_\ell}$ is close to the
solitary forcing frequency. The remaining modes are assumed to be non-resonant and hence contribute only higher-order corrections. We write
\begin{equation*}
\lambda_\ell=\omega_0^2,
\end{equation*}
i.e., $\omega_0$ is the natural frequency of the dominant resonant cluster mode.

At this stage, the high-dimensional network dynamics has been reduced to two coupled second-order equations: one equation for the amplitude of a single resonant cluster mode and one equation for the phase of the solitary oscillator relative to the synchronized cluster. Equivalently, after introducing the corresponding velocities, the reduced dynamics is four-dimensional. The selected cluster mode behaves as a weakly forced harmonic oscillator, while the solitary phase provides the rotator component of the reduced oscillator-rotator system.

\subsection{Reduced second-order system.}
Under the dominant-mode assumption, the full cluster dynamics reduces to a single cluster mode $\eta_\ell$, coupled to the solitary phase $\psi_N$. From now on, to simplify notation, we drop the indices $\ell$ and $N$ and write
\begin{equation*}
\eta:=\eta_\ell,\qquad
\psi:=\psi_N,\qquad
q:=q^{(k)}_\ell,\qquad
\kappa:=\kappa_{Nk}=\kappa_{kN}.
\end{equation*}
With this convention, our equations of motion become
\begin{align} \label{eq:ScaledModelReduced}
\ddot{\eta} + \omega_0^2\,\eta
&= \varepsilon(
-\hat\alpha\,\dot{\eta}
+ C_{\ell}\,\eta^2
+ \kappa\,q\,\sin\psi
)
+ \mathcal{O}(\varepsilon^2),\\ \nonumber
\ddot{\psi}
&= \varepsilon\,(\hat P_N-\hat\alpha\,\Omega
-\,\hat\alpha\,\dot{\psi})
+ \mathcal{O}(\varepsilon^2).
\end{align}
Note that the coefficient $C_{\ell}$ comes from projecting onto the $\ell$-th Laplacian mode the quadratic nonlinear remainder obtained by Taylor expanding the internal coupling terms inside the synchronized cluster. The details of this calculation are given in Appendix~A.

Thus, \eqref{eq:ScaledModel} reduces to a harmonic oscillator for the dominant cluster mode, weakly coupled to a forced rotator describing the solitary oscillator.

\subsection{Reduced first-order system.}
We rewrite the second-order system \eqref{eq:ScaledModelReduced} as a first-order vector field to apply the normal-form transformation in Section~4. For this purpose, we introduce
\begin{equation}\label{eq:coordinates}
v:=\dot{\eta},\qquad
w:=\dot{\psi}.
\end{equation}
With new coordinates \eqref{eq:coordinates}, the reduced system \eqref{eq:ScaledModelReduced} becomes
\begin{align}\label{eq:vector-splitting-no-x}
\begin{pmatrix}
\dot{\eta}\\[2pt]
\dot{v}\\[2pt]
\dot{\psi}\\[2pt]
\dot{w}
\end{pmatrix}
&=
\begin{pmatrix}
v\\
-\omega_0^2 \eta\\
w\\
0
\end{pmatrix}
\;+\;
\varepsilon\,
\begin{pmatrix}
0\\[2pt]
-\hat\alpha\,v + C_{\ell}\,\eta^2 + \kappa\,q\,\sin\psi\\[2pt]
0\\[2pt]
\hat P_N - \hat\alpha\,\Omega - \hat\alpha\,w
\end{pmatrix}\\ \nonumber
&\quad+\;
\varepsilon^2\,
\begin{pmatrix}
0\\[2pt]
-\kappa\,q^2\,\eta\,\cos\psi\\[2pt]
0\\[2pt]
-\kappa\,\sin\psi
\end{pmatrix}
\;+\;
\mathcal{O}(\varepsilon^3)
\end{align}
The solitary angular velocity evolves
on the slow time scale since $\dot w=\mathcal O(\varepsilon)$. Therefore, we consider solutions for which $w$ remains
$\varepsilon$-close to its leading-order value $\Omega_0$. We thus write\begin{equation}
w=\Omega_0+\varepsilon\,\widetilde w,
\end{equation}
where $\Omega_0:=  \frac{\hat P_N}{\hat\alpha} -\Omega$, see \eqref{eq:uncoupledSolitaryFrequency} and \eqref{var:solitarycorotating}, denotes the leading-order mean angular velocity of the solitary oscillator in the co-rotating frame, and $\widetilde w$ describes its small variation. With this substitution, the equation for the solitary oscillator becomes
\begin{equation}
\dot\psi=\Omega_0+\varepsilon\,\widetilde w, \qquad \dot {\widetilde w} = \hat P_N - \hat\alpha(\Omega +\Omega_0 )+\mathcal{O}(\varepsilon).
\end{equation}
Notice that $\psi$ is a rotator with leading frequency $\Omega_0$, weakly modulated by the slow correction $\widetilde w$. To simplify notation, we henceforth write $w$ instead of $\widetilde w$.

Thus, to leading order, $\psi$ evolves with angular velocity $\Omega_0$, and the term $\kappa\,q\,\sin\psi$ in the $\dot v$-equation acts as a periodic forcing of the cluster-mode subsystem $(\eta,v)$ with forcing frequency $\Omega_0$. We are interested in how the oscillator $(\eta,v)$ responds to this forcing, and in particular in those values of $\Omega_0$ for which the response amplitude becomes large.

This occurs when the forcing frequency $\Omega_0$ is close to the frequency $\omega_0$ of the dominant cluster mode, or more generally when $\Omega_0$ is close to a rational multiple of $\omega_0$. In that case, the weak forcing generated by the solitary oscillator can accumulate over long times and produce a large-amplitude response in the dominant Laplacian mode. As a result, the dynamics of the solitary oscillator and the cluster mode become strongly correlated through phase locking or frequency matching, and the weak coupling at the attachment node $k$ can induce deformations of the synchrony manifold.

\subsection{Complex coordinates for the Cluster }
We finally introduce a complex coordinate for the selected cluster mode, $z:=\eta - \frac{i}{\omega_0}v$. This change of coordinates combines the real variables $(\eta,\dot \eta)$ into a single complex variable $z$ for the dominant Laplacian eigenmode. In the new variable $u:=(z,\psi,w)^\top$, the order one part of the system \eqref{eq:vector-splitting-no-x} is given by the vector field
\begin{equation}\label{eq:L0-vector-field}
L_0(u)\;=\;\bigl(i\omega_0 z,\ \Omega_0,\ 0\bigr).
\end{equation}

Thus the reduced dynamics is of the form
\begin{equation}\label{eq:u}
\dot{u}
=
L_0(u)
\;+\;
\varepsilon\,
F(u)
\;+\;
\varepsilon^2\,
G(u)
\;+\;
\text{higher–order terms}.
\end{equation}
Using the relations between $(\eta,v)$ and $(z,\bar z)$, we derive that the order–$\varepsilon$ term in \eqref{eq:u} reads
\begin{equation} \label{eq:ScaledReducedComplex}
F(u)
=
\begin{pmatrix}
F_{z}(u)\\[2pt]
F_{\psi}(u)\\[2pt]
F_{w}(u)
\end{pmatrix}
=
\begin{pmatrix}
-\dfrac{\hat\alpha}{2}\,(z-\bar z)
-\dfrac{i\,C_{\ell}}{4\omega_0}\,(z+\bar z)^2
- i\,\dfrac{\kappa\,q}{\omega_0}\,\sin\psi\\[6pt]
0\\[4pt]
 P - \hat\alpha\,w
\end{pmatrix},
\end{equation}
and that the order–$\varepsilon^2$ term is given by
\begin{equation*}
G(u)
=
\begin{pmatrix}
G_{z}(u)\\[2pt]
G_{\psi}(u)\\[2pt]
G_{w}(u)
\end{pmatrix}
=
\begin{pmatrix}
i\,\dfrac{\kappa\,q^2}{2\omega_0}\,(z+\bar z)\,\cos\psi\\[6pt]
0\\[4pt]
-\kappa\,\sin\psi
\end{pmatrix}.
\end{equation*}
From Section~4 onward, our analysis is no longer tied to the original network variables; it depends only on this oscillator-rotator structure and on the resonant terms selected by the normal form.
Equation \eqref{eq:u} is the main system that we will analyze in the next sections.

\section{Normal Form Transformations}\label{sec:NF-transformations}

We now analyze the reduced oscillator-rotator system obtained in Section~3. The goal of this section is to simplify the order-$\varepsilon$ part of \eqref{eq:u} by a near-identity coordinate transformation. More precisely, we seek a first-order normal form in which the order-$\varepsilon$ terms commute with the unperturbed vector field $L_0$. This commutation property will be used in Sections~5 and~6 to identify the symmetry of the normal form and to characterize resonant periodic solutions as relative equilibria. Note that the normal-form calculation below does not require the explicit network equations from which the reduced system was derived. Once the reduction in Section~3 has been carried out, the calculation depends on the reduced vector field $F$ and on the leading-order vector field $L_0$. The Kuramoto model therefore enters the calculation through the specific terms and coefficients appearing in $F$. 

Let us consider the system
\begin{equation}\label{eq:NF-system}
\dot u \;=\; L_0(u) \;+\;\varepsilon\,F(u)\;+\;\mathcal{O}(\varepsilon^2),
\quad
u=(z,\psi,w) \in X \; := \;  \mathbb{C}\times \mathbb{S}^1\times \mathbb{R}
\end{equation}
with $L_0$ as in \eqref{eq:L0-vector-field}.
In this section, we prove that there exists a near-identity coordinate transformation that puts
\eqref{eq:NF-system} into a form whose order-$\varepsilon$ term commutes with the leading-order
vector field $L_0$. The geometric effect of the normal-form transformation is illustrated in Figure~\ref{fig:coupled-regime}.
 Specifically, in the new coordinates, the system can be written as
\begin{equation}\label{eq:first-order-nf-commuting-form}
\dot u
=
L_0(u)
+
\varepsilon\,\bar F(u)
+
\mathcal O(\varepsilon^2)
\quad\text{such that}\quad
[L_0,\bar F]=0,
\end{equation}
which will be referred to as the
\emph{first-order normal form} of the system. In the next section, we analyze the truncated system
\begin{equation}\label{eq:first-order-nf-truncated}
\dot u
= \mathcal{F}(u)
:=
L_0(u)
+
\varepsilon\,\bar F(u).
\end{equation}
The commutation relation in \eqref{eq:first-order-nf-commuting-form} implies that $\mathcal{F}$ is equivariant under the action $R_\theta$ generated by $L_0$. 

Under this equivariance, periodic solutions of \eqref{eq:first-order-nf-truncated} can be interpreted as relative equilibria of the action $R_\theta$, which can be found by solving algebraic equations. To prove the main result of this section, we first make some definitions.

\begin{definition}\label{lem:lie-bracket-adjoint}
Let $H$ and $F$ be $C^1$ vector fields on $X$. Their Lie bracket is the $C^{0}$-vector field
\begin{equation}\label{eq:lie-bracket}
[H,F](u)\;:=\; D H(u) F(u)\;-\;D F(u) H(u).
\end{equation}
We denote by ${\rm{ad}}_H$ the linear operator of taking the Lie bracket with $H$, that is,
\begin{equation}\label{eq:adH-def}
\mathrm{ad}_H(F)\;:=\;[H,F]
\end{equation}
for $F$ a $C^{1}$-vector field.
\end{definition}
We now investigate how the flow of a $C^{1}$-vector field $H$ transforms an arbitrary $C^{1}$-vector field $F$. Let $s \mapsto \phi(s)$ be an integral curve of $F$, so that
$$
\frac{d\phi(s)}{ds}=F(\phi(s)),
$$
and let $\Gamma : X \to X$ be a diffeomorphism. Then the curve
$$
\psi(s):=\Gamma(\phi(s))
$$
satisfies
$$
\frac{d\psi(s)}{ds}
=
D\Gamma(\phi(s))\ F(\phi(s))
=
D\Gamma(\Gamma^{-1}(\psi(s)))\ F(\Gamma^{-1}(\psi(s))).
$$
Thus $\psi$ is an integral curve of the \emph{pushforward} vector field $\Gamma_*F : X \to \mathbb{C}\times\mathbb{R}^2$ defined by
$$
\Gamma_*F
=
(D\Gamma \ F)\circ \Gamma^{-1}.
$$
Let $\Phi^H_s$ be the time-$s$ flow of $\dot u = H(u)$. We want to compute the Taylor expansion of $(\Phi^H_s)_*F$ with respect to the flow parameter $s$.
The following lemma is a standard Lie-series formula for the transformation
of vector fields under the flow of a generator. We will omit the proof. For related results in normal-form theory, see
\cite[Theorem~9.1.5]{sanders_averaging_2007} and
\cite{murdock_normal_2003}.
\begin{lemma}\label{lem:pushforward-and-transform}
Let $F,H \in C^1$. Then the pushforward of $F$ under the time-$s$ flow of
\begin{equation*}
\dot u = H(u)
\end{equation*}
satisfies
\begin{equation}\label{eq:pushforward-ode}
\frac{d}{ds}\bigl((\Phi^H_s)_*F\bigr)
=
\mathrm{ad}_H\bigl((\Phi^H_s)_*F\bigr),
\qquad
(\Phi^H_0)_*F = F.
\end{equation}
If, moreover, $F,H \in C^2$, then formally
\begin{equation*}\label{eq:pushforward-exp-merged}
(\Phi^H_s)_*F
=
e^{s\,\mathrm{ad}_H}(F)
=
F + s\,\mathrm{ad}_H(F) + \frac{s^2}{2}\,\mathrm{ad}_H^2(F) + \mathcal{O}(s^3).
\end{equation*}
In particular, at $s=\varepsilon$,
\begin{equation*}
(\Phi_H^\varepsilon)_*F
=
e^{\varepsilon\,\mathrm{ad}_H}(F)
=
F+\varepsilon[H,F]+\mathcal{O}(\varepsilon^2).
\end{equation*}
Consequently, if
\begin{equation*}\label{eq:NF-system-merged}
\dot u
=
L_0(u)+\varepsilon F(u)+\mathcal{O}(\varepsilon^2)
\end{equation*}
and we perform the near-identity change of variables
\begin{equation}\label{eq:near-identity-flow-merged}
\bar u=\Phi_H^\varepsilon(u)=u +\varepsilon H(u) + \mathcal{O}(\varepsilon^2),
\end{equation}
then the transformed system is
\begin{equation}\label{eq:first-order-transformed}
\dot{\bar u}
=
L_0(\bar u)
+
\varepsilon\bigl(F(\bar u)+[H,L_0](\bar u)\bigr)
+
\mathcal{O}(\varepsilon^2).
\end{equation}
\end{lemma}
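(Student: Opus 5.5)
The plan is to derive the differential equation \eqref{eq:pushforward-ode} directly from the definition of the pushforward and the group property of the flow. The exponential series then follows by iterating that equation, and \eqref{eq:first-order-transformed} follows by applying the result to the vector field $L_0+\varepsilon F$ with $s=\varepsilon$.

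\emph{Step 1 (derivative at $s=0$).} Write $\Phi_s:=\Phi^H_s$, so that $\Phi_s^{-1}=\Phi_{-s}$ and
\[
(\Phi_s)_*F(u)=D\Phi_s(\Phi_{-s}(u))\,F(\Phi_{-s}(u)).
\]
I will differentiate this in $s$ at $s=0$, using three facts: $\Phi_0=\mathrm{id}$, $\partial_s\Phi_{-s}(u)|_{s=0}=-H(u)$, and $\partial_s D\Phi_s|_{s=0}=DH$. The last one comes from differentiating $\partial_s\Phi_s=H\circ\Phi_s$ with respect to $u$ and exchanging derivatives, which is legitimate for $H\in C^1$. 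By the product rule the result is
\[
\frac{d}{ds}\Big|_{s=0}(\Phi_s)_*F=DH\,F-DF\,H=[H,F],
\]
with the bracket convention \eqref{eq:lie-bracket}. I will check the sign carefully, because the convention here is opposite to the usual Lie-derivative convention. The fact that the pushforward, rather than the pullback, appears is exactly what makes the sign come out as $+[H,F]$.

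\emph{Step 2 (all $s$).} The chain rule gives $(\Phi_{s+t})_*=(\Phi_t)_*(\Phi_s)_*$. Setting $G_s:=(\Phi_s)_*F$, I get $G_{s+t}=(\Phi_t)_*G_s$. Differentiating in $t$ at $t=0$ and applying Step 1 with $F$ replaced by $G_s$ yields $\frac{d}{ds}G_s=\mathrm{ad}_H(G_s)$ with $G_0=F$, which is \eqref{eq:pushforward-ode}. To apply Step 1 to $G_s$, I need $G_s$ to be $C^1$. This holds when $F,H\in C^2$, because then $\Phi_s$ is a $C^2$ diffeomorphism. For merely $C^1$ data I will instead compute $\partial_s$ directly at general $s$, using the flow identity $D\Phi_{s}$-cocycle.

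\emph{Step 3 (exponential series and $s=\varepsilon$).} Differentiating \eqref{eq:pushforward-ode} again gives $G_s''=\mathrm{ad}_H^2G_s$, and in particular $G''_0=\mathrm{ad}_H^2F$. Taylor's formula with remainder then gives
\[
G_s=F+s[H,F]+\tfrac{s^2}{2}\mathrm{ad}_H^2F+\mathcal O(s^3).
\]
Further iteration gives the series for $e^{s\,\mathrm{ad}_H}$, understood only as a formal series since each application of $\mathrm{ad}_H$ costs one derivative.

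\emph{Step 4 (transformed system).} I apply the expansion with $s=\varepsilon$ to the whole field $L_0+\varepsilon F$:
\[
(\Phi_\varepsilon)_*(L_0+\varepsilon F)=L_0+\varepsilon[H,L_0]+\varepsilon F+\varepsilon^2[H,F]+\mathcal O(\varepsilon^2).
\]
This is \eqref{eq:first-order-transformed}, evaluated at the new variable $\bar u$. The expansion $\bar u=u+\varepsilon H(u)+\mathcal O(\varepsilon^2)$ in \eqref{eq:near-identity-flow-merged} is simply the Taylor expansion of the flow in time.

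\emph{Main obstacle.} The hard part is the regularity bookkeeping needed to make the $\mathcal O(s^3)$ and $\mathcal O(\varepsilon^2)$ remainders honest. The remainder term involves $\mathrm{ad}_H^3F$, which requires more smoothness than $C^2$, and it also requires uniform bounds on the non-compact phase space $X$. I would first show that $H$, $F$ and $L_0$ are smooth with bounded derivatives on compact subsets of $X$, which holds for the trigonometric-polynomial fields relevant here. I would then state the remainder estimates locally uniformly on such subsets, and treat the full exponential series as formal, as the statement itself indicates.
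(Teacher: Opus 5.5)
Your proposal is correct, and it is the standard Lie-series argument that the paper omits and defers to Sanders--Verhulst--Murdock: you differentiate the pushforward at $s=0$ to get $DH\,F-DF\,H=[H,F]$ in the paper's sign convention, propagate this to all $s$ via $(\Phi_{s+t})_*=(\Phi_t)_*(\Phi_s)_*$, iterate to obtain the series, and apply the result to $L_0+\varepsilon F$ at $s=\varepsilon$. For the merely $C^1$ case and for $G_s''$, the cleanest regularity bookkeeping uses $D\Phi_s(\Phi_{-s}(u))=\bigl(D\Phi_{-s}(u)\bigr)^{-1}$: this gives $\frac{d}{ds}(\Phi_s)_*F=(\Phi_s)_*[H,F]$ directly, hence $G_s''=(\Phi_s)_*\mathrm{ad}_H^2F$ under only $F,H\in C^2$, and the identification $(\Phi_s)_*[H,F]=[H,(\Phi_s)_*F]$ then follows from naturality of the bracket together with $(\Phi_s)_*H=H$.
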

We will now choose the generator $H$ so that the near-identity transformation $\bar u=\Phi_H^\varepsilon(u)$ simplifies the order-$\varepsilon$ term in \eqref{eq:first-order-transformed}. By Lemma~\ref{lem:pushforward-and-transform}, the transformed order-$\varepsilon$ term is
\begin{equation*}
F+[H,L_0] = F - \operatorname{ad}_{L_0}H .
\end{equation*}
This motivates us to study linear operator 
\begin{equation}\label{eq:homological-operator}
\operatorname{ad}_{L_0}:\mathfrak X(X)\to \mathfrak X(X),
\qquad
\operatorname{ad}_{L_0}H := [H,L_0],
\end{equation}
where $\mathfrak X(X)$ denotes the space of vector fields on $X$. We call
$\operatorname{ad}_{L_0}$ the homological operator associated to $L_0$. Terms in the image of
$\operatorname{ad}_{L_0}$ can be removed from $F$ by a suitable choice of the generator
$H$. Therefore, we analyze the eigenvectors and eigenvalues of
$\operatorname{ad}_{L_0}$. We do this using on a Fourier-Taylor basis.

\begin{lemma}
\label{lem:homological-eigenpairs}
Let
\begin{equation*}
L_0(z,\psi,w)=(i\omega_0 z,\Omega_0,0),
\qquad
X=\mathbb C\times \mathbb S^1\times \mathbb R.
\end{equation*}
For $\alpha=(a,b,k,c)\in \mathbb N_0\times \mathbb N_0\times \mathbb Z\times \mathbb N_0,$
define the Fourier-Taylor monomial
\begin{equation*}
M_\alpha(z,\psi,w):=z^a\bar z^b e^{ik\psi}w^c.
\end{equation*}
Let $e_z,e_\psi,e_w$ denote the coordinate vector fields in the $z$, $\psi$,
and $w$ directions. Then the vector-field monomials
\begin{equation*}
M_\alpha e_j,\qquad j\in{z,\psi,w},
\end{equation*}
are eigenvectors of the homological operator $\operatorname{ad}_{L_0}$ defined in
\eqref{eq:homological-operator}. More precisely,
\begin{equation*}
\operatorname{ad}_{L_0}(M_\alpha e_j)
=
i\lambda_\alpha^{(j)}M_\alpha e_j,
\end{equation*}
where
\begin{equation*}
\begin{aligned}
\lambda_\alpha^{(z)}
&=(a-b-1)\omega_0+k\Omega_0,\
\lambda_\alpha^{(\psi)}
=(a-b)\omega_0+k\Omega_0,\
\lambda_\alpha^{(w)}
=(a-b)\omega_0+k\Omega_0.
\end{aligned}
\end{equation*}
We call $\lambda_\alpha^{(j)}$ the homological eigenvalue associated with the
basis vector field $M_\alpha e_j$.
\end{lemma}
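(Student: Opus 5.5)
The proof is a direct computation of the Lie bracket in Definition~\ref{lem:lie-bracket-adjoint}. With the convention \eqref{eq:homological-operator}, we have $\operatorname{ad}_{L_0}(M_\alpha e_j)=D(M_\alpha e_j)\,L_0-DL_0\,(M_\alpha e_j)$, so there are two pieces: the derivative of the monomial field along $L_0$, and the linear part of $L_0$ acting on the monomial field. I will compute each in turn.

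To differentiate on $X=\mathbb C\times\mathbb S^1\times\mathbb R$, I will use Wirtinger calculus, treating $z$ and $\bar z$ as independent variables. Then $L_0$ has components $\dot z=i\omega_0 z$, $\dot{\bar z}=-i\omega_0\bar z$, $\dot\psi=\Omega_0$ and $\dot w=0$. Because $M_\alpha e_j$ has the constant direction $e_j$, the first term is the directional derivative of the scalar $M_\alpha$ along $L_0$, multiplied by $e_j$:
\[
\partial_z M_\alpha\, i\omega_0 z+\partial_{\bar z}M_\alpha\,(-i\omega_0\bar z)+\partial_\psi M_\alpha\,\Omega_0+\partial_w M_\alpha\cdot 0
= i\bigl((a-b)\omega_0+k\Omega_0\bigr)M_\alpha .
\]
Each monomial factor is thus an eigenfunction of the derivation $L_0\cdot\nabla$. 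The factor $w^c$ contributes nothing because $L_0$ has zero $w$-component, and $e^{ik\psi}$ contributes $ik\Omega_0$.

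For the second term, $DL_0$ is constant: its only nonzero entry is $i\omega_0$ in the $z$-row and $z$-column, since the $\psi$- and $w$-components of $L_0$ are constants. Hence $DL_0\,e_z=i\omega_0 e_z$ and $DL_0\,e_\psi=DL_0\,e_w=0$. Subtracting gives the eigenvalue $(a-b-1)\omega_0+k\Omega_0$ for $j=z$, and $(a-b)\omega_0+k\Omega_0$ for $j=\psi$ and $j=w$.

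The only point needing care is the real structure. A vector field on $X$ with complex $z$-component is really a real field whose $\bar z$-component is the conjugate of the $z$-component. Likewise, the $\psi$- and $w$-components $M_\alpha$ are complex-valued. I would handle this by working on the complexification of $\mathfrak X(X)$, where $\operatorname{ad}_{L_0}$ extends $\mathbb C$-linearly. The $z$-component of the bracket is computed from the $z$-row alone, and the $\bar z$-component is then the conjugate of the $z$-component, so the identity holds as stated. Real vector fields are recovered as combinations of a monomial and its conjugate partner, whose eigenvalue is $-i\lambda$. I expect this bookkeeping to be the main obstacle; the computation itself is immediate.
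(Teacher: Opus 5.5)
Your proposal is correct and matches the paper's proof: both compute the derivative of $M_\alpha$ along $L_0$ as $i\bigl((a-b)\omega_0+k\Omega_0\bigr)M_\alpha$ and subtract $DL_0\,(M_\alpha e_j)$, which is $i\omega_0 M_\alpha e_z$ for $j=z$ and $0$ for $j\in\{\psi,w\}$. Your complexification of $\mathfrak X(X)$, needed because $M_\alpha e_\psi$ and $M_\alpha e_w$ have complex-valued components, is handled correctly and makes explicit a point the paper leaves implicit.
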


\begin{proof}
First, observe that the derivative of $M_\alpha$ along $L_0$ is
\begin{equation*}
D M_\alpha(u) \cdot L_0(u)
=
(i\omega_0 z)\partial_z M_\alpha
+(-i\omega_0\bar z)\partial_{\bar z}M_\alpha
+\Omega_0\partial_\psi M_\alpha .
\end{equation*}
Hence
\begin{equation*}
D M_\alpha(u)\cdot L_0(u)
=
i\big((a-b)\omega_0+k\Omega_0\big)M_\alpha(u).
\end{equation*}
Next, recall that
\begin{equation*}
\operatorname{ad}_{L_0}M_\alpha(u)e_j=[M_\alpha(u)e_j,L_0]
=
D M_\alpha(u)e_j \cdot L_0(u)-D L_0(u)\cdot M_\alpha(u)e_j(u).
\end{equation*}
We compute the two terms separately. The first term satisfies $D(M_\alpha e_j) \cdot {L_0}=(DM_\alpha \cdot {L_0})\,e_j$.
For $j\in\{\psi,w\}$, the corresponding components of $L_0$ are constant, so $D L_0 \cdot {M_\alpha e_\psi}=0$ and
$D L_0 \cdot {M_\alpha e_w}=0$. 
Therefore
\begin{equation*}
\operatorname{ad}_{L_0}(M_\alpha e_\psi)
=
i\big((a-b)\omega_0+k\Omega_0\big)M_\alpha e_\psi,
\end{equation*}
and
\begin{equation*}
\operatorname{ad}_{L_0}(M_\alpha e_w)
=
i\big((a-b)\omega_0+k\Omega_0\big)M_\alpha e_w.
\end{equation*}
For $j=z$, we use $(L_0)_z=i\omega_0z$. Thus
\begin{equation*}
D L_0(u)\cdot M_\alpha e_z=i\omega_0M_\alpha e_z.
\end{equation*}
Consequently,
\begin{equation*}
\operatorname{ad}_{L_0}(M_\alpha e_z)
=i\big((a-b)\omega_0+k\Omega_0\big)M_\alpha e_z - i\omega_0M_\alpha e_z,
\end{equation*}
which gives
\begin{equation*}
\operatorname{ad}_{L_0}(M_\alpha e_z)
=
i\big((a-b-1)\omega_0+k\Omega_0\big)M_\alpha e_z.
\end{equation*}
This proves the stated eigenvalue formulas.
\end{proof}

In the present paper, the order-$\varepsilon$ vector field $F$ is a finite
combination of polynomial terms in $z,\bar z,w$ and Fourier modes in $\psi$.
Thus each component of $F$ can be expanded as
\begin{equation}\label{eq:fourierExp}
F_j(u)=\sum_\alpha f_j^\alpha M_\alpha(u),
\quad
j\in\{z,\psi,w\}.
\end{equation}
Here, $f_j^\alpha\in\mathbb C$. Since the components $F_\psi$ and $F_w$
are real-valued, their coefficients satisfy the conjugate-symmetry relation
\begin{equation*}
f_j^{(b,a,-k,c)}
=
\overline{f_j^{(a,b,k,c)}},
\qquad
j\in\{\psi,w\}.
\end{equation*} The homological eigenvalues in
Lemma~\ref{lem:homological-eigenpairs} determine which Fourier--Taylor
monomials are resonant. Terms for which
$\lambda_\alpha^{(j)}\neq0$ are non-resonant and can be removed by a
near-identity transformation, whereas terms for which
$\lambda_\alpha^{(j)}=0$ are resonant and remain in the first-order normal
form. This motivates the following decomposition of $F$ into resonant and
non-resonant parts.

\begin{definition}
The resonant and non-resonant index sets are defined as
\begin{equation*}
\Lambda_j^{\mathcal{R}} := \{\alpha \in \mathcal A : \lambda^{(j)}_\alpha = 0\},
\qquad
\Lambda_j^{\mathcal{N}} := \{\alpha \in \mathcal A : \lambda^{(j)}_\alpha \neq 0\}
\end{equation*} for $j\in\{z,\psi,w\}.$
Accordingly, we decompose the $j$-th component of $F$, as given in \eqref{eq:fourierExp}, as
\begin{equation}\label{eq:resonant-splitting}
F_j
=
F^{\mathcal{R}}_j
+
F^{\mathcal{N}}_j,
\end{equation}
where
\begin{equation*}
F^{\mathcal{R}}_j
:=
\sum_{\alpha\in\Lambda_j^{\mathcal{R}}} f^{\alpha}_{j}\,M_\alpha,
\qquad
F^{\mathcal{N}}_j
:=
\sum_{\alpha\in\Lambda_j^{\mathcal{N}}} f^{\alpha}_{j}\,M_\alpha.
\end{equation*}
Here, $F^{\mathcal{R}}_j$ is called the resonant part of $F_j$ and $F^{\mathcal{N}}_j$ its non-resonant part.
\end{definition}
The following theorem is the main result of this section. It shows that, after a suitable near-identity transformation, the order-$\varepsilon$ part of the vector field is reduced exactly to its resonant part, while all non-resonant terms are eliminated. This normal form will serve as the starting point for the analysis in the next section.
\begin{theorem}\label{thm:first-order-normal-form}
Consider \eqref{eq:NF-system}, and suppose that each component $F_j$, $j\in{z,\psi,w}$, admits a finite Fourier-Taylor expansion of the form
\begin{equation}
F_j(u)=\sum_{\alpha\in\mathcal A} f_j^\alpha M_\alpha(u),
\qquad
j\in{z,\psi,w},
\end{equation}
where only finitely many coefficients $f_j^\alpha$ are nonzero. Then, there exists a near-identity transformation of the form
$\bar u= u+\varepsilon H(u) + \mathcal{O}(\varepsilon^2)$ such that under this transformation \eqref{eq:NF-system} becomes
\begin{equation}\label{eq:NF-first-order-result}
\dot{\bar u}
\;=\;
L_0\,(\bar u)
\;+\;
\varepsilon\,\bar F(\bar u)
\;+\;\mathcal{O}(\varepsilon^2),
\end{equation}
where $\bar F$ is the resonant projection of $F$, i.e.
\begin{equation}\label{eq:NF-resonant-projection}
\bar F_j(u)
\;=\;
\sum_{\alpha\in\Lambda_j^{\mathcal{R}}} f^{\alpha}_{j}\,M_\alpha(u),
\qquad
j\in\{z,\psi,w\}.
\end{equation}
\end{theorem}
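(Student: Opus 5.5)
We construct the generator $H$ explicitly on the Fourier--Taylor basis, using Lemma~\ref{lem:pushforward-and-transform} to reduce the problem to a homological equation and Lemma~\ref{lem:homological-eigenpairs} to solve it termwise. By Lemma~\ref{lem:pushforward-and-transform}, the change of variables $\bar u=\Phi_H^\varepsilon(u)=u+\varepsilon H(u)+\mathcal O(\varepsilon^2)$ turns \eqref{eq:NF-system} into \eqref{eq:first-order-transformed}. The new first-order term is $F+[H,L_0]=F-\operatorname{ad}_{L_0}H$. It therefore suffices to find $H$ with
\begin{equation*}
\operatorname{ad}_{L_0}H = F^{\mathcal N},
\end{equation*}
where $F^{\mathcal N}$ is the vector field with components $F^{\mathcal N}_j$ from \eqref{eq:resonant-splitting}. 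Then $F-\operatorname{ad}_{L_0}H=F^{\mathcal R}=\bar F$, which is exactly \eqref{eq:NF-resonant-projection}.

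To solve the homological equation, set
\begin{equation*}
H:=\sum_{j\in\{z,\psi,w\}}\ \sum_{\alpha\in\Lambda_j^{\mathcal N}} \frac{f_j^\alpha}{i\lambda_\alpha^{(j)}}\,M_\alpha e_j .
\end{equation*}
This is a finite sum, since only finitely many $f_j^\alpha$ are nonzero. Each denominator is nonzero by the definition of $\Lambda_j^{\mathcal N}$. Because $\operatorname{ad}_{L_0}$ is linear and $M_\alpha e_j$ is an eigenvector with eigenvalue $i\lambda_\alpha^{(j)}$ (Lemma~\ref{lem:homological-eigenpairs}), we get $\operatorname{ad}_{L_0}H=\sum_j\sum_{\alpha\in\Lambda_j^{\mathcal N}} f_j^\alpha M_\alpha e_j=F^{\mathcal N}$, as required.

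It remains to check that $H$ is a legitimate generator: a smooth (indeed real-analytic) vector field on $X$ taking values in the correct real spaces, so that Lemma~\ref{lem:pushforward-and-transform} applies. Smoothness is clear, since $H$ is a finite combination of polynomials in $z,\bar z,w$ and trigonometric functions of $\psi$; in particular $H\in C^2$. The main obstacle is reality of the $\psi$- and $w$-components. I expect to settle it with the conjugate-symmetry relation $f_j^{(b,a,-k,c)}=\overline{f_j^{(a,b,k,c)}}$ for $j\in\{\psi,w\}$. Write $\alpha'=(b,a,-k,c)$ for $\alpha=(a,b,k,c)$. Then $M_{\alpha'}=\overline{M_\alpha}$ and $\lambda^{(j)}_{\alpha'}=-\lambda^{(j)}_\alpha$ for $j\in\{\psi,w\}$. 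Hence $\alpha\mapsto\alpha'$ preserves $\Lambda_j^{\mathcal N}$, and
\begin{equation*}
\frac{f_j^{\alpha'}}{i\lambda^{(j)}_{\alpha'}}=\overline{\left(\frac{f_j^\alpha}{i\lambda_\alpha^{(j)}}\right)},
\end{equation*}
so the terms of $H_j$ pair into complex conjugates and $H_j$ is real. The same pairing shows that $F^{\mathcal R}_j$ is real. The $z$-component is complex-valued by construction, so it needs no such condition.

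Finally, the flow $\Phi^H_s$ exists for $|s|\le\varepsilon$ on compact subsets of $X$, since $H$ is smooth and the $\psi$-direction is compact. Lemma~\ref{lem:pushforward-and-transform} then gives \eqref{eq:NF-first-order-result}. The $\mathcal O(\varepsilon^2)$ remainder collects the original $\mathcal O(\varepsilon^2)$ terms together with the second-order Lie-series terms $\tfrac{\varepsilon^2}{2}\operatorname{ad}_H^2 L_0$ and $\varepsilon^2[H,F]$. All of these are bounded on compact sets because $H$, $F$ and $L_0$ are polynomial--trigonometric. This is therefore a local (compact-set) statement, which matches the formal level at which Lemma~\ref{lem:pushforward-and-transform} is stated.
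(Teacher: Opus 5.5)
Your construction is the paper's: solve the homological equation coefficientwise on the Fourier--Taylor basis via Lemma~\ref{lem:homological-eigenpairs}, give $H$ zero coefficients on resonant monomials, and read off $\bar F=F^{\mathcal R}$. Your conjugate-pairing argument for the reality of $H_\psi,H_w$ and $F^{\mathcal R}_\psi,F^{\mathcal R}_w$ is a useful check that the paper omits. The key step, however, has a sign error. You take the identity $F+[H,L_0]=F-\operatorname{ad}_{L_0}H$ from the sentence preceding \eqref{eq:homological-operator}. It contradicts the definition $\operatorname{ad}_{L_0}H:=[H,L_0]$ given right after it. Moreover, $H\mapsto[H,L_0]$ is exactly the operator that Lemma~\ref{lem:homological-eigenpairs} diagonalizes, since its proof starts from $[M_\alpha e_j,L_0]$. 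The first-order term in \eqref{eq:first-order-transformed} is therefore $F+\operatorname{ad}_{L_0}H$. With your coefficients $h_j^\alpha=f_j^\alpha/(i\lambda_\alpha^{(j)})$ you get $[H,L_0]=F^{\mathcal N}$, so the transformed first-order term is $F^{\mathcal R}+2F^{\mathcal N}$. The non-resonant part is doubled rather than removed. A direct check confirms this. For $\dot z=i\omega_0 z+\varepsilon f e^{i\psi}$, $\dot\psi=\Omega_0$, the substitution $\bar z=z+\varepsilon h e^{i\psi}$ gives exactly $\dot{\bar z}=i\omega_0\bar z+\varepsilon\bigl(f+i(\Omega_0-\omega_0)h\bigr)e^{i\psi}$. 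Since $\Omega_0-\omega_0=\lambda^{(z)}_{(0,0,1,0)}$, one needs $h=-f/(i\lambda)$, not $h=f/(i\lambda)$.

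The repair is to take $h_j^\alpha=-f_j^\alpha/(i\lambda_\alpha^{(j)})=i f_j^\alpha/\lambda_\alpha^{(j)}$ for $\alpha\in\Lambda_j^{\mathcal N}$. Equivalently, solve $[H,L_0]=-F^{\mathcal N}$, i.e.\ $[L_0,H]=F^{\mathcal N}$. The paper's proof poses $[L_0,H]=F$, which has the right sign, but then reduces it to $i\lambda_\alpha^{(j)}h_j^\alpha=f_j^\alpha$ instead of $-i\lambda_\alpha^{(j)}h_j^\alpha=f_j^\alpha$. Its formula for $h_j^\alpha$ therefore carries the same sign slip as yours. Nothing else in your argument changes. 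Negating all coefficients commutes with the pairing $\alpha\mapsto(b,a,-k,c)$, so $H_\psi,H_w$ remain real. Your treatment of the $\mathcal O(\varepsilon^2)$ remainder on compact sets is also unaffected.
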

\begin{proof}Consider the near-identity change of coordinates $\tilde u=u+\varepsilon H(u)$. Expanding the transformed vector field to first order in $\varepsilon$ gives
\begin{equation*}
\dot{\bar u}
=
L_0(\bar u)
+
\varepsilon\Bigl(F(\bar u)+[H,L_0](\bar u)\Bigr)
+
\mathcal O(\varepsilon^2).
\end{equation*}
To eliminate as many order-$\varepsilon$ terms as possible, we consider the homological equation
\begin{equation}\label{eq:homological-equation}
[L_0,H] \;=\; F.
\end{equation}
Assume that $H$ has the Fourier-Taylor expansion $H_j(u)=\sum_{\alpha\in\mathcal A} h_j^\alpha\,M_\alpha(u).$
Using the Fourier-Taylor expansions of $F$ and $H$, together with the fact that the homological operator acts diagonally on the basis vector fields $M_\alpha e_j$, equation \eqref{eq:homological-equation} reduces to
\begin{equation}\label{eq:homological-equation-coef}
i\lambda^{(j)}_\alpha\, h^\alpha_j = f^\alpha_j,
\qquad
j\in\{z,\psi,w\}.
\end{equation}
This equation can be solved only when $\lambda^{(j)}_\alpha\neq 0$. Therefore, the non-resonant terms, namely those with $\alpha\in\Lambda_j^{\mathcal{N}}$, can be removed by choosing
\begin{equation}\label{eq:H-coefficients}
h^\alpha_j
=
\frac{f^\alpha_j}{i\lambda^{(j)}_\alpha},
\qquad
\alpha\in\Lambda_j^{\mathcal{N}}, \quad j\in\{z,\psi,w\},
\end{equation}
while for the resonant indices $\alpha\in\Lambda_j^\mathcal{R}$ we set
\begin{equation*}
h^\alpha_j =0.
\end{equation*}
Hence, the transformed system takes the form
\begin{equation*}
\dot{\bar u}
=
L_0(\bar u)
+
\varepsilon\,\bar F(\bar u)
+
\mathcal O(\varepsilon^2),
\end{equation*}
which is exactly \eqref{eq:NF-first-order-result}. This proves the claim.
\end{proof}
From now on, we drop the bar and write $u$ instead of $\bar u$ for the normal-form coordinates.

\section{Normal Form Symmetry}\label{sec:NFSym}

In Section~4, we transformed the reduced oscillator-rotator system into its first-order normal form and obtained the commutation relation $[L_0,\bar F]=0.$ Now we explain the geometric meaning of this relation. The flow of the unperturbed vector field $L_0$ consists of rotation in the complex $z$-plane and translation in the solitary phase $\psi$. The commutation relation implies that the truncated normal form is equivariant under the action of this flow. 

The main result of this section is Theorem~\ref{thm:NF-equivariant}, where we make this statement precise by proving the equivariance of the truncated first-order normal form. This symmetry is the reason for introducing the normal form: it allows the periodic solutions considered later to be characterized as relative equilibria of the symmetry action. In Section~6, we use this equivariance to formulate the relative-equilibrium problem and reduce it to algebraic conditions.

Recall that we work on the phase space
\begin{equation*}
X \;=\; \mathbb{C}\times \mathbb{S}^1\times \mathbb{R},
\quad \text{with elements} \quad
u=(z,\psi,w).
\end{equation*}
The unperturbed vector field in \eqref{eq:NF-system} is given by \eqref{eq:L0-vector-field}. 
Its flow defines a one-parameter family of diffeomorphisms $R_\theta:X\to X$,
$\theta\in\mathbb{R}$, of the form
\begin{equation}\label{eq:Rtheta}
R_\theta(z,\psi,w)
=
\bigl(e^{i\omega_0 \theta} z,\ \psi + \Omega_0 \theta,\ w\bigr).
\end{equation}
For $u\in X$, we denote the $R_\theta$-orbit of $u$ by
\begin{equation}\label{eq:OrbitRtheta}
\mathcal{O}(u)
=
\bigl\{\,R_\theta(u) : \theta \in \mathbb{R}\,\bigr\}\subset X. 
\end{equation}
We will make the following assumption from now on.
\begin{assumption}\label{ass:FreqRatio}
The frequency ratio $\frac{\omega_0}{\Omega_0} \in \mathbb{Q}$ is rational.
\end{assumption}
\begin{lemma}\label{lem:common-period}
Assume that Assumption~\ref{ass:FreqRatio} holds. Then there exists $T>0$ such that
\begin{equation}\label{eq:Rtheta-periodic}
R_{\theta+T}=R_\theta
\qquad\text{for all }\theta\in\mathbb{R}.
\end{equation}
In particular, $R_T=\mathrm{Id}_X$. In other words, $R_\theta$ defines a circle action.
\end{lemma}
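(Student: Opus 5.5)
The plan is to construct $T$ explicitly from the rational frequency ratio, and then use the group property of the flow to pass from $R_T=\mathrm{Id}_X$ to periodicity of $\theta\mapsto R_\theta$.

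\textbf{Choice of $T$.} By Assumption~\ref{ass:FreqRatio} we write $\omega_0/\Omega_0 = p/r$ with $p\in\mathbb{Z}$, $r\in\mathbb{N}$ coprime. Throughout, $\omega_0>0$ (since $\omega_0^2=\lambda_\ell$ with $\omega_0$ the positive root) and $\Omega_0\neq 0$ (otherwise the ratio is undefined). We then set
\begin{equation*}
T := \frac{2\pi |p|}{\omega_0} = \frac{2\pi r}{|\Omega_0|}.
\end{equation*}
The two expressions agree because $|p|\,|\Omega_0| = r\,\omega_0$. Moreover $T>0$, since $p\neq 0$ follows from $\omega_0\neq 0$.

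\textbf{Verification that $R_T=\mathrm{Id}_X$.} Using \eqref{eq:Rtheta}, we check the three components separately:
\begin{itemize}
\item $e^{i\omega_0 T} = e^{2\pi i |p|} = 1$, so the $z$-component is unchanged;
\item $\Omega_0 T = \pm 2\pi r \in 2\pi\mathbb{Z}$, so $\psi+\Omega_0 T = \psi$ in $\mathbb{S}^1=\mathbb{R}/2\pi\mathbb{Z}$;
\item the $w$-component is unchanged trivially.
\end{itemize}
Hence $R_T=\mathrm{Id}_X$.

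\textbf{Periodicity and the circle action.} Since $R_\theta$ is the flow of the autonomous field $L_0$, it satisfies $R_{\theta+s}=R_\theta\circ R_s$. This can also be seen directly from \eqref{eq:Rtheta}, because exponents and translations add. Therefore $R_{\theta+T}=R_\theta\circ R_T=R_\theta$ for all $\theta\in\mathbb{R}$, which is \eqref{eq:Rtheta-periodic}. It follows that $\theta\mapsto R_\theta$ descends to a well-defined action of $\mathbb{R}/T\mathbb{Z}\cong \mathbb{S}^1$ on $X$.

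There is no real obstacle here. The only points needing care are the sign of $\Omega_0$ and the fact that the $\psi$-component must be read modulo $2\pi$; both are handled by taking absolute values in the definition of $T$.
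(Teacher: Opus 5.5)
Your proof is correct and follows essentially the same route as the paper. The paper writes $\omega_0=m\omega^*$, $\Omega_0=n\omega^*$ with coprime integers and $\omega^*>0$, and takes $T=2\pi/\omega^*$, which is the same as your $T=2\pi|p|/\omega_0$. The only cosmetic difference is the order of steps: the paper computes $R_{\theta+T}=R_\theta$ directly from \eqref{eq:Rtheta} and then sets $\theta=0$ to get $R_T=\mathrm{Id}_X$, whereas you check $R_T=\mathrm{Id}_X$ first and then use the group law $R_{\theta+T}=R_\theta\circ R_T$.
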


\begin{proof}
Assume that Assumption~\ref{ass:FreqRatio} holds, so there exist coprime integers
$m,n\in\mathbb{Z}$ and $\omega^*>0$ such that $\omega_0=m\omega^*$ and $\Omega_0=n\omega^*$.
Define
\begin{equation}\label{eq:common-period-T}
T \;:=\; \frac{2\pi}{\omega^*}
\;=\;\frac{2\pi m}{\omega_0}
\;=\;\frac{2\pi n}{\Omega_0}.
\end{equation}
By \eqref{eq:Rtheta} we have
\begin{align*}
R_{\theta+T}(z,\psi,w)
&=
\bigl(e^{i\omega_0(\theta+T)} z,\ \psi + \Omega_0(\theta+T),\ w\bigr) \\
&=
\bigl(e^{i\omega_0\theta} e^{i2\pi m}z,\ \psi + \Omega_0\theta + 2\pi n,\ w\bigr) \\
&=
\bigl(e^{i\omega_0\theta} z,\ \psi + \Omega_0\theta,\ w\bigr)
=
R_\theta(z,\psi,w).
\end{align*}
This proves \eqref{eq:Rtheta-periodic}.
Taking $\theta=0$ yields $R_T=\mathrm{Id}_X$.
\end{proof}

\begin{definition}\label{def:Rtheta-equivariant}
A smooth vector field $F:X\to TX$ is called \emph{$R_\theta$-equivariant} if
\begin{equation}\label{eq:RthetaSymmetry}
D R_\theta(u)\,F(u) \;=\; F\bigl(R_\theta(u)\bigr)
\end{equation}
for all $u\in X$ and all $\theta\in\mathbb{R}$.
\end{definition}

The proofs of the following two lemmas are omitted, since they are standard consequences of the general theory of equivariant vector fields. For details, see \cite{lee_introduction_2012}, Theorems 9.42 and 9.44.
\begin{lemma}\label{lem:equivariance-vs-bracket}
Let $F \in C^1$, then $F$ is $R_\theta$-equivariant if and only if
\begin{equation}
[F,L_0] = 0.
\end{equation}
\end{lemma}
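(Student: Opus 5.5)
The plan is to compare the two sides of the equivariance identity along the flow parameter $\theta$, using that $R_\theta$ is exactly the time-$\theta$ flow $\Phi^{L_0}_\theta$ of $L_0$. This follows from \eqref{eq:Rtheta}, since $\frac{d}{d\theta}R_\theta(u)=L_0(R_\theta(u))$ and $R_0=\mathrm{Id}$. With this identification, equivariance \eqref{eq:RthetaSymmetry} is equivalent to the pushforward identity $(R_\theta)_*F=F$ for all $\theta$. Indeed, $(R_\theta)_*F=(DR_\theta\,F)\circ R_{-\theta}$, and $R_\theta$ is a diffeomorphism with inverse $R_{-\theta}$. So the statement reduces to showing that $(\Phi^{L_0}_\theta)_*F=F$ for all $\theta$ exactly when $[F,L_0]=0$.

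\emph{Direction} ($\Rightarrow$). Assume $(\Phi^{L_0}_\theta)_*F=F$ for all $\theta$ and differentiate at $\theta=0$. By \eqref{eq:pushforward-ode} in Lemma~\ref{lem:pushforward-and-transform}, with $H=L_0$, the derivative is $\mathrm{ad}_{L_0}(F)$, so $[L_0,F]=0$ and hence $[F,L_0]=0$. Alternatively, one can differentiate $DR_\theta(u)F(u)=F(R_\theta(u))$ directly in $\theta$. Since $DR_\theta$ is the constant linear map $\mathrm{diag}(e^{i\omega_0\theta},1,1)$, at $\theta=0$ this gives $DL_0(u)F(u)=DF(u)L_0(u)$, which is $[L_0,F]=0$ by \eqref{eq:lie-bracket}. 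Only $F\in C^1$ is needed here.

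\emph{Direction} ($\Leftarrow$). Assume $[F,L_0]=0$ and set $G_\theta:=(R_\theta)_*F$. By \eqref{eq:pushforward-ode}, $G_\theta$ solves the linear evolution equation $\frac{d}{d\theta}G_\theta=\mathrm{ad}_{L_0}G_\theta$ with $G_0=F$. The constant family $G_\theta\equiv F$ also solves it, because $\mathrm{ad}_{L_0}F=0$. The hardest step is uniqueness, since $\mathrm{ad}_{L_0}$ is an unbounded differential operator. To avoid it, I would argue pointwise and use the explicit linear form of $R_\theta$. Fix $u$ and define
\begin{equation*}
g(\theta):=DR_{-\theta}\,F(R_\theta(u)).
\end{equation*}
Differentiating, and using $\frac{d}{d\theta}DR_{-\theta}=-DR_{-\theta}\,DL_0$ (valid because $DL_0=\mathrm{diag}(i\omega_0,0,0)$ is constant and commutes with $DR_{-\theta}$), gives
\begin{equation*}
g'(\theta)=DR_{-\theta}\bigl(DF\,L_0-DL_0\,F\bigr)(R_\theta(u))=DR_{-\theta}\,[F,L_0](R_\theta(u))\cdot(\pm1),
\end{equation*}
which vanishes. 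Hence $g(\theta)=g(0)=F(u)$, and applying $DR_\theta$ yields \eqref{eq:RthetaSymmetry}. This computation uses only $F\in C^1$ together with the chain rule, so no Lie-series convergence is needed. Assumption~\ref{ass:FreqRatio} and Lemma~\ref{lem:common-period} play no role, because the argument holds for every $\theta\in\mathbb{R}$.
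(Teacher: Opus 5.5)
Your proof is correct and is essentially the standard argument behind the result the paper cites in place of a proof (Lee, Theorems 9.42 and 9.44): the converse direction differentiates the pulled-back field $DR_{-\theta}F(R_\theta(u))$ along the flow to produce the bracket, and the forward direction differentiates the equivariance identity at $\theta=0$. Your specialization to the explicit linear flow, where $DR_\theta$ is constant, has the small advantage of working directly under the lemma's $C^1$ hypothesis on a globally defined flow. The only blemish is the hedged factor $(\pm 1)$: with the paper's convention $[H,F]=DH\,F-DF\,H$, one gets exactly $g'(\theta)=DR_{-\theta}\,[F,L_0](R_\theta(u))$.
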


\begin{lemma}\label{lem:FlowCommutes}
Let $F$ be an $R_\theta$-equivariant vector field on $X$ and let $\Phi_t$ denote its flow.
Then, for all $u\in X$, $t\in\mathbb{R}$ and $\theta\in\mathbb{R}$,
\begin{equation}\label{eq:FlowCommutes}
\Phi_t\bigl(R_\theta(u)\bigr)
=
R_\theta\bigl(\Phi_t(u)\bigr).
\end{equation}
\end{lemma}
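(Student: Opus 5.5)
The plan is to prove the statement directly from the definition of equivariance together with uniqueness of solutions of initial value problems. Fix $\theta\in\mathbb R$ and $u\in X$, and consider the two curves
\begin{equation*}
\gamma_1(t):=\Phi_t\bigl(R_\theta(u)\bigr),
\qquad
\gamma_2(t):=R_\theta\bigl(\Phi_t(u)\bigr),
\end{equation*}
each defined on the maximal interval of existence of the relevant solution. At $t=0$ both equal $R_\theta(u)$. The idea is to show that both curves are integral curves of $F$; uniqueness then forces them to coincide.

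For $\gamma_1$ this holds by definition of the flow. For $\gamma_2$, differentiate and apply the chain rule, and then use the equivariance identity \eqref{eq:RthetaSymmetry} at the point $\Phi_t(u)$:
\begin{equation*}
\dot\gamma_2(t)
=
DR_\theta\bigl(\Phi_t(u)\bigr)\,F\bigl(\Phi_t(u)\bigr)
=
F\bigl(R_\theta(\Phi_t(u))\bigr)
=
F\bigl(\gamma_2(t)\bigr).
\end{equation*}
Since $F$ is smooth (at least $C^1$, hence locally Lipschitz), the Picard--Lindel\"of theorem gives uniqueness, so $\gamma_1=\gamma_2$ on their common interval of existence. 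This yields \eqref{eq:FlowCommutes}.

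The only point needing care is the domain of existence, since $F$ need not be complete on $X$. I would handle it by a symmetric argument. $R_\theta$ is a diffeomorphism with inverse $R_{-\theta}$, and $R_{-\theta}$ also satisfies the equivariance relation. So $R_\theta$ maps maximal integral curves through $u$ to integral curves through $R_\theta(u)$, and $R_{-\theta}$ maps them back. Hence the maximal intervals of existence of the solutions through $u$ and through $R_\theta(u)$ coincide, and the identity holds wherever either side is defined, in particular for all $t$ when the flow is global.

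Alternatively, one could note that $R_\theta$ is the flow of $L_0$ and invoke Lemma~\ref{lem:equivariance-vs-bracket} together with the standard fact that commuting vector fields have commuting flows. I would nevertheless present the direct ODE-uniqueness argument above, because it is self-contained. I do not expect a genuine obstacle here; the domain-of-existence remark is the only subtle step.
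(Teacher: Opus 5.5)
Your proof is correct, but it takes a different route from the one the paper has in mind. The paper omits the proof and cites Lee's Theorems 9.42 and 9.44 \cite{lee_introduction_2012}. Since $R_\theta$ is the flow of $L_0$, equivariance means $F$ is invariant under that flow, hence $[F,L_0]=0$ (Lemma~\ref{lem:equivariance-vs-bracket}). Commuting vector fields have commuting flows, which is exactly \eqref{eq:FlowCommutes}.

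You instead work directly from Definition~\ref{def:Rtheta-equivariant}. The chain rule together with \eqref{eq:RthetaSymmetry} shows that $R_\theta$ maps integral curves of $F$ to integral curves of $F$, and uniqueness of solutions finishes the argument.

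Your route has several advantages. It is self-contained and uses neither the bracket characterization nor the fact that $R_\theta$ is generated by $L_0$. It works for any family of diffeomorphisms satisfying \eqref{eq:RthetaSymmetry}, and it needs only $F$ locally Lipschitz. Your use of $R_{-\theta}$ to match the maximal existence intervals also makes explicit the completeness that the phrase ``for all $t\in\mathbb{R}$'' silently assumes.

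The cited route has its own advantage: it plugs directly into the normal-form output $[L_0,\bar F]=0$ of Theorem~\ref{thm:first-order-normal-form}. There one passes from the commutation relation straight to commuting flows, and the converse direction comes for free.

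At the foundational level the two arguments agree. Lee's proof that commuting fields have commuting flows rests on the same naturality-of-flows argument (conjugating integral curves by a diffeomorphism and invoking uniqueness) that you give explicitly.
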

The following theorem is the main result of this section. It shows that the truncated first-order normal form inherits the $R_\theta$-equivariance generated by $L_0$.
\begin{theorem}\label{thm:NF-equivariant}
The truncated first-order normal form
\begin{equation}\label{eq:NF-truncated}
\dot u \;=\mathcal{F} \;=\; L_0(u)+\varepsilon\,\bar F(u)
\end{equation}
is $R_\theta$-equivariant.
\end{theorem}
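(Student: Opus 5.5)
The plan is to reduce the statement to the bracket criterion of Lemma~\ref{lem:equivariance-vs-bracket}: $\mathcal F=L_0+\varepsilon\bar F$ is $R_\theta$-equivariant if and only if $[\mathcal F,L_0]=0$. Bilinearity and antisymmetry of the Lie bracket give
\begin{equation*}
[\mathcal F,L_0]=[L_0,L_0]+\varepsilon[\bar F,L_0]=\varepsilon[\bar F,L_0],
\end{equation*}
so it suffices to show $[\bar F,L_0]=0$, that is, $\operatorname{ad}_{L_0}\bar F=0$ in the notation of \eqref{eq:homological-operator}. The term $L_0$ itself is trivially equivariant. One can also check this directly, since $R_\theta$ is the flow of $L_0$ and $DR_\theta\,L_0=L_0\circ R_\theta$.

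Next I would use the explicit form of $\bar F$ from Theorem~\ref{thm:first-order-normal-form}. Each component is a finite sum $\bar F_j=\sum_{\alpha\in\Lambda_j^{\mathcal R}}f_j^\alpha M_\alpha$. By linearity of $\operatorname{ad}_{L_0}$ and Lemma~\ref{lem:homological-eigenpairs},
\begin{equation*}
\operatorname{ad}_{L_0}\bar F=\sum_{j}\sum_{\alpha\in\Lambda_j^{\mathcal R}} i\lambda_\alpha^{(j)} f_j^\alpha M_\alpha e_j=0,
\end{equation*}
because $\lambda_\alpha^{(j)}=0$ on the resonant index sets. Finiteness of the sum means no convergence issues arise, and the monomials are smooth, so $\bar F$ is $C^1$ and Lemma~\ref{lem:equivariance-vs-bracket} applies.

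The main obstacle is a technical point about realness. The coordinate $z$ is complex, and the vector field is written via $(z,\bar z)$ monomials, while $\psi$ and $w$ are real. I would check that the resonant truncation of $F_\psi$ and $F_w$ stays real-valued. This holds because the conjugate symmetry $f_j^{(b,a,-k,c)}=\overline{f_j^{(a,b,k,c)}}$ maps $\lambda_\alpha^{(j)}$ to $-\lambda_\alpha^{(j)}$, so the conjugation pairing preserves the resonant sets. I would also note that the bracket in Lemma~\ref{lem:homological-eigenpairs} is computed with $z,\bar z$ as independent variables and $\dot{\bar z}=\overline{\dot z}$, which is consistent with the real Lie bracket on $X$ viewed as $\mathbb R^2\times\mathbb S^1\times\mathbb R$.

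As a sanity check, and as an alternative proof avoiding Lemma~\ref{lem:equivariance-vs-bracket}, I would verify \eqref{eq:RthetaSymmetry} directly on monomials. We have $M_\alpha(R_\theta u)=e^{i((a-b)\omega_0+k\Omega_0)\theta}M_\alpha(u)$, and $DR_\theta$ multiplies the $e_z$ component by $e^{i\omega_0\theta}$ while leaving $e_\psi$ and $e_w$ unchanged. Equivariance of $M_\alpha e_j$ is then exactly the condition $\lambda_\alpha^{(j)}=0$.
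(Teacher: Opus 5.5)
Your proof is correct and follows the paper's argument: you reduce to the bracket criterion of Lemma~\ref{lem:equivariance-vs-bracket}, use bilinearity together with $[L_0,L_0]=0$, and observe that $\bar F$ lies in the kernel of $\operatorname{ad}_{L_0}$. You make that last step more explicit than the paper does, via the eigenvalue formula of Lemma~\ref{lem:homological-eigenpairs} on the resonant index sets, and you add a worthwhile check that the resonant truncation keeps $\bar F_\psi,\bar F_w$ real; otherwise the route is the same.
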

\begin{proof}
By construction, see Theorem~\ref{thm:first-order-normal-form},
the order-$\varepsilon$ term $\bar F$ of the normal form
lies in the kernel of $\mathrm{ad}_{L_0}$.
Hence
\begin{equation}
[L_0,\bar F]=0.
\end{equation}
Since $[L_0,L_0]=0$, we obtain
\begin{equation}
[L_0,\,L_0+\varepsilon\,\bar F]
=
[L_0,L_0]
+
\varepsilon\,[L_0,\bar F]
=
0.
\end{equation}
By Lemma~\ref{lem:equivariance-vs-bracket},
a vector field commutes with $L_0$ if and only if it is
$R_\theta$-equivariant. Therefore,
the truncated normal form \eqref{eq:NF-truncated}
is $R_\theta$-equivariant.
\end{proof}

\begin{remark}
By Lemma~\ref{lem:FlowCommutes}, the symmetry $R_\theta$
maps solutions of \eqref{eq:NF-truncated} to solutions of \eqref{eq:NF-truncated}.
In particular, if $u(t)$ is a solution of \eqref{eq:NF-truncated},
then $R_\theta(u(t))$ is also a solution for every $\theta\in\mathbb{R}$.
\end{remark}

\begin{remark}
    In components, \eqref{eq:RthetaSymmetry} is equivalent to
\begin{align}
e^{i\omega_0\theta} F_z(z,\psi,w)
&=
F_z\bigl(e^{i\omega_0\theta}z,\ \psi+\Omega_0\theta,\ w\bigr),
\label{eq:RthetaSymmetry-z}\\
F_\psi(z,\psi,w)
&=
F_\psi\bigl(e^{i\omega_0\theta}z,\ \psi+\Omega_0\theta,\ w\bigr),
\label{eq:RthetaSymmetry-psi}\\
F_w(z,\psi,w)
&=
F_w\bigl(e^{i\omega_0\theta}z,\ \psi+\Omega_0\theta,\ w\bigr).
\label{eq:RthetaSymmetry-w}
\end{align}
Thus, the $z$-component of $F$ is equivariant under the action $R_\theta$, whereas the $\psi$- and $w$-components are invariant under this action.\end{remark}  The following lemma shows that, for the Fourier-Taylor basis vector fields
$M_\alpha e_j$, the equivariance condition is equivalent to the resonance
condition $\lambda_\alpha^{(j)}=0$.
\begin{lemma}\label{lem:resonant-equivariant-monomials}
For each $\alpha\in\mathcal A$ and $j\in\{z,\psi,w\}$, the Fourier-Taylor basis
vector field $M_\alpha e_j$ is $R_\theta$-equivariant if and only if
\begin{equation}
\lambda_\alpha^{(j)}=0.
\end{equation}
Equivalently, the resonant basis vector fields are precisely the
$R_\theta$-equivariant basis vector fields.
\end{lemma}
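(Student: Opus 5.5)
The plan is to verify the equivariance identity \eqref{eq:RthetaSymmetry} directly on each basis vector field, using the component form in \eqref{eq:RthetaSymmetry-z}--\eqref{eq:RthetaSymmetry-w}. This avoids Lemma~\ref{lem:equivariance-vs-bracket} entirely. As a cross-check, one can also argue through Lemma~\ref{lem:equivariance-vs-bracket} combined with Lemma~\ref{lem:homological-eigenpairs}: $M_\alpha e_j$ is equivariant iff $[M_\alpha e_j,L_0]=0$. By Lemma~\ref{lem:homological-eigenpairs} this bracket equals $i\lambda_\alpha^{(j)}M_\alpha e_j$. Since $M_\alpha$ is not identically zero, it vanishes iff $\lambda^{(j)}_\alpha=0$. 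That already proves the statement, but I will write out the direct computation because it is self-contained.

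The first step is to evaluate the monomial along the action. From \eqref{eq:Rtheta},
\begin{equation*}
M_\alpha(R_\theta u)=\bigl(e^{i\omega_0\theta}z\bigr)^a\bigl(e^{-i\omega_0\theta}\bar z\bigr)^b e^{ik(\psi+\Omega_0\theta)}w^c
= e^{i((a-b)\omega_0+k\Omega_0)\theta}M_\alpha(u).
\end{equation*}
The second step is to compute $DR_\theta(u)$. It is block diagonal: multiplication by $e^{i\omega_0\theta}$ on the $z$-component and the identity on the $\psi$- and $w$-components.

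The third step splits into the two component cases.

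\emph{Case $j\in\{\psi,w\}$.} The equivariance condition reads $M_\alpha(u)=e^{i((a-b)\omega_0+k\Omega_0)\theta}M_\alpha(u)$ for all $u$ and $\theta$.

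\emph{Case $j=z$.} The condition reads $e^{i\omega_0\theta}M_\alpha(u)=e^{i((a-b)\omega_0+k\Omega_0)\theta}M_\alpha(u)$, which is equivalent to $e^{i\lambda_\alpha^{(z)}\theta}=1$ wherever $M_\alpha(u)\neq0$.

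The main subtlety is the ``only if'' direction, because of how Definition~\ref{def:Rtheta-equivariant} quantifies over $\theta$. The condition $e^{i\lambda\theta}=1$ is required for all $\theta\in\mathbb R$. Differentiating at $\theta=0$, or simply choosing a small $\theta\neq0$, then forces $\lambda=0$. One also needs a point $u$ with $M_\alpha(u)\neq0$, for instance $z=1$, $w=1$, and any $\psi$. If one instead only required equivariance for $\theta$ in $[0,T)$, using the circle action of Lemma~\ref{lem:common-period}, the same argument still works, since it only needs a $\theta$ near $0$. The converse direction is immediate, because $\lambda=0$ makes the exponential factor equal to $1$.
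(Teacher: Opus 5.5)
Your proposal is correct and follows essentially the same route as the paper: compute $M_\alpha(R_\theta u)=e^{i((a-b)\omega_0+k\Omega_0)\theta}M_\alpha(u)$, observe that $DR_\theta$ multiplies the $z$-direction by $e^{i\omega_0\theta}$ and fixes $e_\psi$ and $e_w$, and compare the exponential factors separately for $j=z$ and for $j\in\{\psi,w\}$. The paper leaves implicit the choice of a point $u$ with $M_\alpha(u)\neq 0$ and the step from $e^{i\lambda\theta}=1$ for all $\theta$ to $\lambda=0$; your proposal supplies both. Your alternative check through Lemmas~\ref{lem:equivariance-vs-bracket} and~\ref{lem:homological-eigenpairs} is also valid.
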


\begin{proof}
Recall that
\begin{equation*}
R_\theta(z,\psi,w)
=
\bigl(e^{i\omega_0\theta}z,\ \psi+\Omega_0\theta,\ w\bigr).
\end{equation*}
Hence, for any $\alpha$ we have
\begin{align*}
M_\alpha\bigl(R_\theta(u)\bigr)
&=
\bigl(e^{i\omega_0\theta}z\bigr)^a
\bigl(e^{-i\omega_0\theta}\bar z\bigr)^b
e^{ik(\psi+\Omega_0\theta)}w^c \notag\\
&=
e^{i((a-b)\omega_0+k\Omega_0)\theta}\,M_\alpha(u).
\end{align*}
On the other hand,
\begin{equation*}
D R_\theta(u)e_z=e^{i\omega_0\theta}e_z,
\qquad
D R_\theta(u)e_\psi=e_\psi,
\qquad
D R_\theta(u)e_w=e_w.
\end{equation*}
For $j=z$, we therefore obtain
\begin{equation*}
D R_\theta(u)\bigl(M_\alpha(u)e_z\bigr)
=
e^{i\omega_0\theta}M_\alpha(u)e_z,
\end{equation*}
whereas
\begin{equation*}
(M_\alpha e_z)\bigl(R_\theta(u)\bigr)
=
M_\alpha\bigl(R_\theta(u)\bigr)e_z
=
e^{i((a-b)\omega_0+k\Omega_0)\theta}M_\alpha(u)e_z.
\end{equation*}
Thus $M_\alpha e_z$ is $R_\theta$-equivariant if and only if
\begin{equation*}
e^{i\omega_0\theta}
=
e^{i((a-b)\omega_0+k\Omega_0)\theta}
\qquad\text{for all }\theta,
\end{equation*}
which is equivalent to
\begin{equation*}
(a-b-1)\omega_0+k\Omega_0=0.
\end{equation*}
This is the case if and only if
\begin{equation*}
\lambda_\alpha^{(z)}=0.
\end{equation*}
For $j\in \{\psi,w\}$, we have
\begin{equation*}
M_\alpha(u) e_j
=
e^{i((a-b)\omega_0+k\Omega_0)\theta}M_\alpha(u)e_j.
\end{equation*}
Hence $M_\alpha e_\psi$ and $M_\alpha e_w$ are $R_\theta$-equivariant if and only if
\begin{equation*}
(a-b)\omega_0+k\Omega_0=0,
\end{equation*}
that is, if and only if
\begin{equation*}
\lambda_\alpha^{(j)}=0\qquad j \in\{ \psi,w\}.
\end{equation*}
This proves the claim.
\end{proof}

\section{Relative Equilibria}\label{sec:RelEq}
In the previous section, we showed that the flow of the unperturbed vector field $L_0$ defines the action $R_\theta$, and the truncated normal form is symmetric under this action. In this section, we exploit the $R_\theta$-symmetry to reduce the search for periodic solutions
of interest, namely resonant solitary states, to algebraic conditions imposed at a single point on an $R_\theta$-orbit. The main references for the theory of relative equilibria are \cite{krupa_bifurcations_1990,chossat_methods_2000,marsden_introduction_1999}. For background on symmetric
dynamics, we refer to 
\cite{field_lectures_2020,golubitsky_symmetry_2002}. For general background on Lie groups, we refer to
\cite{duistermaat_lie_2000}.

\subsection{Fundamental vector fields and tangency characterization}
We first recall the general language of relative equilibria for equivariant vector fields. This material is standard, but it is included to make clear why symmetry turns a periodic-orbit problem into an algebraic tangency condition. The key point is that a relative equilibrium is a solution whose time evolution is completely generated by the group action. Therefore it can be detected by checking equality between the vector field and a fundamental vector field at a single point.

Let $\mathcal G$ be a Lie group with Lie algebra $\mathfrak{g}$, acting smoothly on the
phase space $X$ by a left action. For each $\xi \in \mathfrak{g}$, the exponential map $\exp : \mathfrak{g} \to \mathcal G$
defines a one-parameter subgroup $t \mapsto \exp(t\xi) \in \mathcal G$, whose action on
$X$ produces a curve
\begin{equation*}\label{eq:CurveFromExp}
t \mapsto \exp(t\xi)\cdot u \in X.
\end{equation*}

\begin{definition}\label{def:FundamentalField}
For $\xi \in \mathfrak{g}$, the corresponding \emph{fundamental vector field}
(or \emph{infinitesimal generator}) $\xi_X$ on $X$ is defined by
\begin{equation}\label{eq:FundamentalFieldDef}
\xi_X(u)
:=
\left.\frac{d}{dt}\right|_{t=0}
\bigl(\exp(t\xi)\cdot u\bigr),
\qquad u \in X.
\end{equation}
\end{definition}
Let $G$ be a $\mathcal G$-equivariant vector field on $X$ with flow $\Phi^G_t$.

\begin{definition}\label{def:RelEq-general}
A solution $u(t) = \Phi^G_t(u^*)$ of the differential equation
\begin{equation*}
\dot u = G(u)
\end{equation*}
is called a \emph{relative equilibrium} if there exists an element
$\xi \in \mathfrak{g}$ such that
\begin{equation*}\label{eq:RelEq-def}
\Phi^G_t(u^*) = \exp(t\xi)\cdot u^*
\qquad \text{for all } t \in \mathbb{R}.
\end{equation*}
The element $\xi$ is called the \emph{group velocity} of the relative equilibrium.
\end{definition}
Thus, the evolution of $u^*$ under the flow of $G$ coincides with a group
orbit of the one-parameter subgroup
\begin{equation*}
t \mapsto \exp(t\xi) \in \mathcal G
\end{equation*}
generated by $\xi$, and the trajectory of $u^*$ is obtained by moving along the group
orbit.

The following result gives a standard tangency characterization of relative
equilibria; see, for example, \cite{marsden_introduction_1999}.

\begin{proposition} \label{prop:RelEqFundamental}
Let $G$ be a $\mathcal G$-equivariant vector field on $X$ with flow $\Phi^G_t$. For
$u^* \in X$ and $\xi \in \mathfrak{g}$, the following are equivalent:
\begin{enumerate}
    \item $u^*$ is a relative equilibrium with velocity $\xi$, that is
    \begin{equation}\label{eq:RelEq-orbit}
    \Phi^G_t(u^*)
    =
    \exp(t\xi)\cdot u^*
    \qquad \text{for all } t \in \mathbb{R}.
    \end{equation}
    \item The vector field at $u^*$ coincides with the fundamental vector field
    generated by $\xi$:
    \begin{equation}\label{eq:RelEq-tangent}
    G(u^*) = \xi_X(u^*).
    \end{equation}
\end{enumerate}
\end{proposition}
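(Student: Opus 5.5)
The plan is to prove both directions directly, using only the definition of the fundamental vector field, uniqueness of solutions to $\dot u = G(u)$, and the $\mathcal G$-equivariance of $G$. Throughout, $\Phi_g(u) := g\cdot u$ denotes the action map and $DG$-type derivatives act on tangent vectors.

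\emph{(1) $\Rightarrow$ (2).} Assume \eqref{eq:RelEq-orbit}. Both sides are differentiable in $t$, so we differentiate at $t=0$. The left-hand side gives $\frac{d}{dt}\big|_{t=0}\Phi^G_t(u^*) = G(u^*)$, because $\Phi^G_t$ is the flow of $G$. The right-hand side gives $\xi_X(u^*)$ by Definition~\ref{def:FundamentalField}. Hence \eqref{eq:RelEq-tangent} holds. Equivariance is not needed in this direction.

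\emph{(2) $\Rightarrow$ (1).} Assume $G(u^*)=\xi_X(u^*)$ and set $\gamma(t):=\exp(t\xi)\cdot u^*$. I will show that $\gamma$ solves $\dot u=G(u)$ with $\gamma(0)=u^*$; uniqueness of solutions then gives $\gamma(t)=\Phi^G_t(u^*)$ for all $t$ in the common interval of existence. Since $\gamma$ is defined for all $t\in\mathbb R$, this also shows the solution through $u^*$ is global. To compute $\dot\gamma(t)$, use the one-parameter subgroup property $\exp((t+s)\xi)=\exp(t\xi)\exp(s\xi)$:
\begin{equation*}
\dot\gamma(t)=\frac{d}{ds}\Big|_{s=0}\exp(t\xi)\cdot\bigl(\exp(s\xi)\cdot u^*\bigr)=D\Phi_{\exp(t\xi)}(u^*)\,\xi_X(u^*).
\end{equation*}
Substituting the hypothesis \eqref{eq:RelEq-tangent} turns this into $D\Phi_{\exp(t\xi)}(u^*)\,G(u^*)$. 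By $\mathcal G$-equivariance of $G$ (the analogue of \eqref{eq:RthetaSymmetry} for a general group element $g$), this equals $G(\exp(t\xi)\cdot u^*)=G(\gamma(t))$. So $\gamma$ is an integral curve of $G$ starting at $u^*$, which proves (1).

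The main point requiring care is this last step. One must write $\dot\gamma(t)$ as the derivative of the action map applied to $\xi_X(u^*)$, rather than as $\xi_X(\gamma(t))$ directly, so that the hypothesis (stated only at $u^*$) can be transported along the orbit by equivariance. The other ingredient to check is that $G$ is regular enough (at least $C^1$, as it is for the smooth truncated normal form) for uniqueness of solutions to apply.
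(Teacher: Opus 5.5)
Your argument is correct and is the standard proof: you differentiate at $t=0$ for (1)$\Rightarrow$(2), and for (2)$\Rightarrow$(1) you show that $\gamma(t)=\exp(t\xi)\cdot u^*$ is an integral curve of $G$ by pushing $G(u^*)=\xi_X(u^*)$ along the orbit with $D\Phi_{\exp(t\xi)}(u^*)$ and equivariance, then invoke uniqueness. The paper gives no proof and only cites the result as standard \cite{marsden_introduction_1999}; one small overstatement in your write-up is that $\dot\gamma(t)=\xi_X(\gamma(t))$ is also a valid formula (since $\exp((t+s)\xi)=\exp(s\xi)\exp(t\xi)$), but using it would require the extra step of showing $G=\xi_X$ along the whole orbit, which your formulation avoids.
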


\subsection{$\mathbb{S}^1$-symmetry of our system}
Recall that, under Assumption~\ref{ass:FreqRatio}, the unperturbed flow
$R_\theta$ in \eqref{eq:Rtheta} is periodic. Let $T>0$ denote a minimal period, so that $R_{\theta+T}=R_\theta$
for all $\theta\in\mathbb R$.
We therefore regard the group as parametrized by $\theta\in \mathbb R/T\mathbb Z$ and identify the symmetry group with $\mathcal G=\mathbb R/T\mathbb Z\cong \mathbb S^1.$ The action of $\mathcal G$ on
$X=\mathbb C\times \mathbb S^1\times \mathbb R$ is given by
\begin{equation*}
[\theta]\cdot u = R_\theta(u),
\qquad
u\in X,
\end{equation*}
where $[\theta]$ denotes the equivalence class of $\theta$ modulo $T$. For
$\xi\in\mathbb R$, the corresponding one-parameter subgroup is
\begin{equation*}
\exp(t\xi)=[t\xi]\in \mathbb R/T\mathbb Z,
\end{equation*}
and its action on $X$ is
\begin{equation*}
\exp(t\xi)\cdot u = R_{t\xi}(u).
\end{equation*}
The infinitesimal generator of the $\mathbb{S}^1$-action $R_\theta$ is the
vector field $L_0$ introduced in \eqref{eq:L0-vector-field}.
Thus, for $\xi \in \mathbb{R}$, the associated fundamental vector field is
\begin{equation}\label{eq:FundamentalFieldXi}
\xi_X(u)
=
\left.\frac{d}{dt}\right|_{t=0} R_{t\xi}(u)
=
\xi\,\frac{d}{d\theta} R_\theta(u)\Big|_{\theta=0}
=
\xi\,L_0(u).
\end{equation}

Combining Proposition~\ref{prop:RelEqFundamental} with
\eqref{eq:FundamentalFieldXi} yields the following characterization of relative equilibria for our
system.

\begin{lemma}\label{lem:RelEqOurSystem}
Let $\Phi^\mathcal{F}_t$ be the flow of 
$R_\theta$-equivariant vector field defined in \eqref{eq:NF-truncated}. A point $u^* \in X$ is a relative equilibrium with velocity
$\xi \in \mathbb{R}$ if and only if
\begin{equation}\label{eq:RelEqOurSystemCond}
\mathcal{F}(u^*) = \xi\,L_0(u^*).
\end{equation}
\end{lemma}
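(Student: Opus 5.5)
The plan is to obtain Lemma~\ref{lem:RelEqOurSystem} as a direct specialization of Proposition~\ref{prop:RelEqFundamental} to the circle action $\mathcal G=\mathbb R/T\mathbb Z$ acting by $R_\theta$. Three hypotheses need checking. First, $\mathcal{F}=L_0+\varepsilon\bar F$ must be $\mathcal G$-equivariant; this is Theorem~\ref{thm:NF-equivariant}. Second, $R_\theta$ must descend to a genuine action of $\mathbb R/T\mathbb Z$; this is Lemma~\ref{lem:common-period} under Assumption~\ref{ass:FreqRatio}. Third, the fundamental vector field must have the form $\xi_X=\xi L_0$, which is \eqref{eq:FundamentalFieldXi}. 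Substituting $G=\mathcal F$ and $\xi_X(u^*)=\xi L_0(u^*)$ into \eqref{eq:RelEq-tangent} yields \eqref{eq:RelEqOurSystemCond} verbatim.

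Since Proposition~\ref{prop:RelEqFundamental} is only cited, I would also give a self-contained argument. For the forward direction, suppose $\Phi^{\mathcal F}_t(u^*)=R_{t\xi}(u^*)$ for all $t$. Differentiating at $t=0$ gives
\[
\mathcal F(u^*)=\tfrac{d}{dt}\big|_{t=0}R_{t\xi}(u^*)=\xi L_0(u^*).
\]
Here I use the group property $R_{\theta+s}=R_\theta\circ R_s$, which holds because $R_\theta$ is the flow of $L_0$.

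For the converse, assume $\mathcal F(u^*)=\xi L_0(u^*)$ and set $\gamma(t):=R_{t\xi}(u^*)$. Then
\[
\dot\gamma(t)=\xi L_0(R_{t\xi}u^*)=\xi\, DR_{t\xi}(u^*)L_0(u^*),
\]
where the second equality holds because $L_0$ commutes with its own flow, i.e.\ $L_0$ is $R_\theta$-equivariant. Using the hypothesis, this equals $DR_{t\xi}(u^*)\mathcal F(u^*)$. By Definition~\ref{def:Rtheta-equivariant} applied to $\mathcal F$, this in turn equals $\mathcal F(R_{t\xi}u^*)=\mathcal F(\gamma(t))$. So $\gamma$ solves $\dot u=\mathcal F(u)$ with $\gamma(0)=u^*$. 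Since $\mathcal F$ is smooth (polynomial and trigonometric), uniqueness of solutions gives $\gamma(t)=\Phi^{\mathcal F}_t(u^*)$ for all $t$, which is \eqref{eq:RelEq-orbit}. Global existence of $\Phi^{\mathcal F}_t(u^*)$ is not an issue here, because $\gamma$ is itself a globally defined solution.

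The main obstacle is bookkeeping rather than analysis. One must make sure the identification $\exp(t\xi)\cdot u=R_{t\xi}(u)$ is consistent with the quotient by $T$. One must also check that equivariance is used in exactly the form of Definition~\ref{def:Rtheta-equivariant}, with $DR_\theta$ acting on $\mathcal F(u)$; this is the content of Theorem~\ref{thm:NF-equivariant} via Lemma~\ref{lem:equivariance-vs-bracket}. Finally, the converse direction really does require the full-group equivariance of $\mathcal F$, not merely the tangency at the single point $u^*$.
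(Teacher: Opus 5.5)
Your proposal is correct and follows the paper's proof: specialize Proposition~\ref{prop:RelEqFundamental} to the action $R_\theta$, using equivariance of $\mathcal F$ and the identity $\xi_X=\xi L_0$ from \eqref{eq:FundamentalFieldXi}. Your self-contained argument is also sound and supplies the proof of the cited proposition, which the paper omits. In it, one direction comes from differentiating at $t=0$, and the other from checking via equivariance that $\gamma(t)=R_{t\xi}(u^*)$ solves $\dot u=\mathcal F(u)$ and then invoking uniqueness. It also shows that periodicity of $R_\theta$ plays no role in this particular lemma.
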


\begin{proof}
By Definition~\ref{def:RelEq-general}, $u^*$ is a relative equilibrium with
velocity $\xi$ if and only if
\begin{equation*}
\Phi^\mathcal{F}_t(u^*) = \exp(t\xi)\cdot u^* = R_{t\xi}(u^*)
\qquad \text{for all } t \in \mathbb{R}.
\end{equation*}
By Proposition~\ref{prop:RelEqFundamental}, this is equivalent to
\begin{equation*}
\mathcal{F}(u^*) = \xi_X(u^*).
\end{equation*}
Using \eqref{eq:FundamentalFieldXi}, we have
\begin{equation*}
\mathcal{F}(u^*) = \xi_X(u^*) = \xi\,L_0(u^*),
\end{equation*}
which proves \eqref{eq:RelEqOurSystemCond}.
\end{proof}

In particular, for an $S^1$-symmetry, a relative equilibrium with
nonzero group velocity is a periodic orbit. Indeed, if $u^*$ is a relative
equilibrium with group velocity $\xi\neq 0$, then
\begin{equation*}
u(t)=\Phi_t^\mathcal{F}(u^*)=R_{\xi t}(u^*).
\end{equation*}
Since $R_\theta$ is periodic in $\theta$ with period $T$, we have
\[
u(t+T/\xi)=u(t).
\]
Thus, any relative equilibrium gives a periodic orbit of the system.  
\subsection{Characterization of Relative Equilibria on a Cross-Section}
The relative-equilibrium condition still contains the redundancy coming from the $S^1$-symmetry: every point on the same group orbit represents the same periodic solution. To remove this redundancy, we introduce a cross-section that selects a representative of each nontrivial orbit by fixing the phase of $z$.

\begin{lemma}\label{lem:CrossSection}
Define the cross-section
\begin{equation}\label{eq:CrossSection}
\Sigma
=
\bigl\{(z,\psi,w)\in X : \Im (z) = 0,\ \Re (z)>0\bigr\}.
\end{equation}
Let $u=(z,\psi,w)\in X$ with $z\neq 0$. Then there exists a point
$u^\Sigma \in \Sigma$ on its $\mathbb{S}^1$-orbit under $R_\theta$, i.e.\ there
is a $\theta\in\mathbb{R}$ such that
\begin{equation*}
u^\Sigma := R_\theta(u) \in \Sigma.
\end{equation*}
Furthermore, if $\omega_0= \Omega_0$ this $u^\Sigma$ is unique.
\end{lemma}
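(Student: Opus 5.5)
Existence is an explicit construction using polar coordinates for the $z$-component. Uniqueness is a counting argument over the orbit, using the circle-action structure from Lemma~\ref{lem:common-period}.

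\textbf{Existence.} Write $z=re^{i\varphi}$ with $r=|z|>0$ and $\varphi\in\mathbb R$, which is possible since $z\neq0$. By \eqref{eq:Rtheta}, the $z$-component of $R_\theta(u)$ is $re^{i(\varphi+\omega_0\theta)}$. This lies on the positive real axis exactly when $\varphi+\omega_0\theta\in 2\pi\mathbb Z$. Since $\omega_0>0$, we can take $\theta:=-\varphi/\omega_0$. Then the $z$-component equals $r>0$, so $\Im=0$ and $\Re>0$. The other two components, $\psi+\Omega_0\theta$ and $w$, impose no constraint in \eqref{eq:CrossSection}. Hence $u^\Sigma:=R_\theta(u)\in\Sigma$.

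\textbf{Uniqueness when $\omega_0=\Omega_0$.} Suppose $R_{\theta_1}(u)$ and $R_{\theta_2}(u)$ both lie in $\Sigma$.
\begin{itemize}
\item From the $z$-component, $e^{i\omega_0(\theta_1-\theta_2)}=1$, so $\omega_0(\theta_1-\theta_2)=2\pi j$ for some $j\in\mathbb Z$.
\item The $\psi$-components then differ by $\Omega_0(\theta_1-\theta_2)=2\pi j\,\Omega_0/\omega_0$.
\item When $\omega_0=\Omega_0$ this difference is $2\pi j$, which is $0$ in $\mathbb S^1$.
\item The $w$-components are equal in any case.
\end{itemize}
Therefore $R_{\theta_1}(u)=R_{\theta_2}(u)$, so $u^\Sigma$ is unique as a point of $X$. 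Equivalently, in the notation of Lemma~\ref{lem:common-period}, $m=n=1$ and $T=2\pi/\omega_0$. The times $\theta$ for which $R_\theta(u)\in\Sigma$ form a single coset $\theta_0+T\mathbb Z$, and all of them give the same point.

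\textbf{Where care is needed.} The only step requiring attention is identifying what uniqueness means: the time $\theta$ itself is never unique, only the point $u^\Sigma$ can be. The argument above also shows why the hypothesis matters. For general rational $\omega_0/\Omega_0=m/n$, the points of the orbit meeting $\Sigma$ are indexed by $j\bmod m$, since the $\psi$-shifts $2\pi jn/m$ are nontrivial for $j\not\equiv0\pmod m$. So there are $m$ distinct intersection points in general, and uniqueness holds exactly when $m=1$. The special case $\omega_0=\Omega_0$ is such a case.
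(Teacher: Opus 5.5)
Your proof is correct and follows essentially the same route as the paper. Existence comes from writing $z=re^{i\varphi}$ and taking $\theta=-\varphi/\omega_0$, and uniqueness from the fact that $\omega_0(\theta_1-\theta_2)\in 2\pi\mathbb Z$ forces equal $\psi$-components when $\omega_0=\Omega_0$ (the paper phrases this as the $2\pi/\omega_0$-periodicity of $R_\theta$); your closing remark, that for $\omega_0/\Omega_0=m/n$ in lowest terms the orbit meets $\Sigma$ in exactly $m$ points, is correct and goes beyond what the paper states.
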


\begin{proof}
Write $z = r e^{i\varphi}$ with $r>0$ and $\varphi\in\mathbb{R}$. The
$z$ component of $R_\theta(u)$ in \eqref{eq:Rtheta} is
\begin{equation*}
e^{i\omega_0\theta}z = r e^{i(\varphi+\omega_0\theta)}.
\end{equation*}
Choosing $\theta = -\varphi/\omega_0$ modulo $2\pi/\omega_0$ makes this real
and positive, so $R_\theta(u)\in\Sigma$, which proves existence.

For uniqueness, suppose $\omega_0= \Omega_0$ and $R_{\theta_1}(u),R_{\theta_2}(u)\in\Sigma$. Then
$e^{i\omega_0\theta_1}z$ and $e^{i\omega_0\theta_2}z$ are both positive real
numbers. With $z = r e^{i\varphi}$ this means
$e^{i(\varphi+\omega_0\theta_j)} = 1$ for $j=1,2$, so
$\varphi+\omega_0\theta_j \in 2\pi\mathbb{Z}$. Subtracting yields
$\omega_0(\theta_1-\theta_2) \in 2\pi\mathbb{Z}$, and since $R_\theta$ is
$2\pi/\omega_0$-periodic in $\theta$ we obtain $R_{\theta_1}(u)=R_{\theta_2}(u)$.
Thus $u^\Sigma$ is unique.
\end{proof}

\begin{lemma}\label{lem:RelEqOrbit}
Let $u^*\in X$ be a relative equilibrium of the truncated system
\eqref{eq:NF-truncated} with velocity $\xi$, i.e.
\begin{equation}\label{eq:RelEqOrbitFlow}
\Phi^\mathcal{F}_t(u^*) = R_{\xi t}(u^*)
\qquad\text{for all } t\in\mathbb{R}.
\end{equation}
For some $\theta\in\mathbb{R}$, let $u^\Sigma := R_\theta(u^*) \in \Sigma$ be a representative in $\Sigma$. Then
$u^\Sigma$ is also a relative equilibrium with the same velocity $\xi$:
\begin{equation}\label{eq:RelEqOrbitFlowSigma}
\Phi_t(u^\Sigma) = R_{\xi t}(u^\Sigma)
\qquad\text{for all } t\in\mathbb{R}.
\end{equation}
\end{lemma}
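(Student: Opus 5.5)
The plan is to use the commutation of the flow $\Phi^{\mathcal F}_t$ with the action $R_\theta$ (Lemma~\ref{lem:FlowCommutes}, applicable by Theorem~\ref{thm:NF-equivariant}) together with the fact that $\theta\mapsto R_\theta$ is a one-parameter group, so that $R_\theta R_{\xi t}=R_{\xi t}R_\theta$. Starting from $u^\Sigma=R_\theta(u^*)$, I would compute directly:
\begin{equation*}
\Phi^{\mathcal F}_t(u^\Sigma)=\Phi^{\mathcal F}_t\bigl(R_\theta(u^*)\bigr)=R_\theta\bigl(\Phi^{\mathcal F}_t(u^*)\bigr)=R_\theta\bigl(R_{\xi t}(u^*)\bigr)=R_{\xi t}\bigl(R_\theta(u^*)\bigr)=R_{\xi t}(u^\Sigma),
\end{equation*}
for all $t\in\mathbb R$. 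This is exactly \eqref{eq:RelEqOrbitFlowSigma}, and it shows that $u^\Sigma$ is a relative equilibrium with the same group velocity $\xi$ in the sense of Definition~\ref{def:RelEq-general}.

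The equalities in the chain are justified as follows. The second equality is Lemma~\ref{lem:FlowCommutes}. The third is the hypothesis \eqref{eq:RelEqOrbitFlow}. The fourth uses $R_\theta R_s=R_{\theta+s}=R_s R_\theta$, which follows from \eqref{eq:Rtheta}, or alternatively from the group property of the flow of $L_0$.

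As an alternative check, I would also verify the statement infinitesimally via Lemma~\ref{lem:RelEqOurSystem}. Apply $DR_\theta(u^*)$ to $\mathcal F(u^*)=\xi L_0(u^*)$. Equivariance gives $\mathcal F(R_\theta u^*)$ on the left. On the right, $L_0$ is $R_\theta$-equivariant, since $[L_0,L_0]=0$ and Lemma~\ref{lem:equivariance-vs-bracket} applies, which gives $\xi L_0(R_\theta u^*)$. Hence $\mathcal F(u^\Sigma)=\xi L_0(u^\Sigma)$.

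There is no real obstacle here. The only point needing care is that the flow in \eqref{eq:RelEqOrbitFlowSigma} is that of $\mathcal F$. One should also note that $\Phi^{\mathcal F}_t$ is defined for all $t$ along these orbits: the orbit of $u^*$ is defined for all time by hypothesis, so its image under the diffeomorphism $R_\theta$ is too.
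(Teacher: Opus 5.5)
Your proposal is correct and follows essentially the same route as the paper. The paper uses the identical chain $\Phi_t(R_\theta u^*)=R_\theta\Phi_t(u^*)=R_\theta R_{\xi t}u^*=R_{\xi t}R_\theta u^*$, justified by Lemma~\ref{lem:FlowCommutes}, the hypothesis, and commutativity of the $R$-action, so your additional infinitesimal check via Lemma~\ref{lem:RelEqOurSystem} is valid but not needed.
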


\begin{proof}
Using Lemma \ref{lem:FlowCommutes}, we can write
\begin{equation*}
\Phi_t(u^\Sigma)
=
\Phi_t(R_\theta(u^*))
=
R_\theta(\Phi_t(u^*))
=
R_\theta(R_{\xi t}(u^*))
=
R_{\xi t}(R_\theta(u^*))
=
R_{\xi t}(u^\Sigma),
\end{equation*}
which shows \eqref{eq:RelEqOrbitFlowSigma}.
\end{proof}
We now combine Lemmas \ref{lem:RelEqOurSystem} and \ref{lem:RelEqOrbit} to characterize relative equilibria on the
cross-section $\Sigma$. The following theorem rewrites the
relative-equilibrium condition as three algebraic equations, which will be
solved for the Kuramoto model in Section~7.

\begin{theorem}\label{thm:RelEqCross}
Let $\mathcal{F}(u) = L_0(u) + \varepsilon  \bar F(u)$ be the truncated
$R_\theta$-equivariant vector field in \eqref{eq:NF-truncated}, and let
$u^*=(z^*,\psi^*,w^*)$ be a relative equilibrium of this system with $z^*\neq 0$.
Let $u^\Sigma\in\Sigma$ be a representative given by
Lemma~\ref{lem:CrossSection}. In particular, $u^\Sigma$ can be written as
\begin{equation}\label{eq:CrossRepForm}
u^\Sigma = (x^*,\psi^*,w^*),\qquad x^* \in \mathbb{R}^+.
\end{equation}
Then, $u^*$ has the group velocity
\begin{equation}\label{eq:RelEqVelocityC}
\xi
=
1+\frac{\varepsilon}{\Omega_0}\,
\bar F_\psi(x^*,\psi^*,w^*)
=
1+\frac{\varepsilon}{\omega_0 x^*}\,
\Im(\bar F_z(x^*,\psi^*,w^*)),
\end{equation}
and the following conditions hold:
\begin{align}
\Re(\bar F_z(x^*,\psi^*,w^*))
&= 0, \label{eq:RelEqCross1}\\[0.3em]
\bar F_w(x^*,\psi^*,w^*)
&= 0, \label{eq:RelEqCross2}\\[0.3em]
\frac{\Im(\bar F_z(x^*,\psi^*,w^*))}{x^*}
&=
\frac{\omega_0}{\Omega_0}\,
\bar F_\psi(x^*,\psi^*,w^*).
\label{eq:RelEqCross3}
\end{align}
Conversely, let $u^\Sigma=(x^*,\psi^*,w^*)\in\Sigma$ with $x^*>0$ satisfy
\eqref{eq:RelEqCross1}-\eqref{eq:RelEqCross3}. Then $u^\Sigma$ is a relative
equilibrium of the truncated system $\dot u = \mathcal{F}(u)$, with group velocity in \eqref{eq:RelEqVelocityC}  and every point on its
$\mathbb{S}^1$-orbit
\begin{equation*}
\mathcal{O}(u^\Sigma)
=
\{ R_\theta(u^\Sigma) : \theta\in\mathbb{S}^1 \}
\end{equation*}
is also a relative equilibrium. Equivalently,  any $u_0\in\mathcal{O}(u^\Sigma)$
 satisfies
\begin{equation}\label{eq:RelEqCrossSolution}
\Phi^\mathcal{F}_t(u_0) = R_{\xi t}(u_0),
\qquad t\in\mathbb{R}.
\end{equation}
\end{theorem}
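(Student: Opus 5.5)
The plan is to reduce everything to the pointwise tangency condition of Lemma~\ref{lem:RelEqOurSystem}, evaluated on the cross-section. For the forward direction, let $u^*$ be a relative equilibrium with velocity $\xi$ and $z^*\neq 0$. By Lemma~\ref{lem:CrossSection} there is a $\theta$ with $u^\Sigma=R_\theta(u^*)\in\Sigma$. By Lemma~\ref{lem:RelEqOrbit}, $u^\Sigma$ is a relative equilibrium with the same $\xi$. Since $\psi$ and $w$ enter $R_\theta$ only through a shift of $\psi$ and trivially, I will write $u^\Sigma=(x^*,\psi^*,w^*)$ with $x^*>0$, abusing the names $\psi^*,w^*$ for the representative's coordinates as the statement does. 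Lemma~\ref{lem:RelEqOurSystem} then gives
\begin{equation*}
L_0(u^\Sigma)+\varepsilon\bar F(u^\Sigma)=\xi L_0(u^\Sigma),
\end{equation*}
and I will split this into components using $L_0(x^*,\psi^*,w^*)=(i\omega_0x^*,\Omega_0,0)$.

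The component equations are the following. The $w$-component gives $\varepsilon\bar F_w=0$, which is \eqref{eq:RelEqCross2}. The $\psi$-component gives $\Omega_0+\varepsilon\bar F_\psi=\xi\Omega_0$, hence the first expression for $\xi$ in \eqref{eq:RelEqVelocityC}; this assumes $\Omega_0\neq0$, which is implicit in Assumption~\ref{ass:FreqRatio}. The $z$-component gives $i\omega_0x^*+\varepsilon\bar F_z=i\xi\omega_0x^*$. Since $x^*$ is real, its real part gives $\Re\bar F_z=0$, which is \eqref{eq:RelEqCross1}. Its imaginary part gives $\xi=1+\varepsilon\Im\bar F_z/(\omega_0x^*)$, the second expression. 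Equating the two expressions for $\xi$ and dividing by $\varepsilon>0$ yields \eqref{eq:RelEqCross3}.

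For the converse, I run the same computation backwards. Given \eqref{eq:RelEqCross1}--\eqref{eq:RelEqCross3}, I define $\xi$ by the first formula in \eqref{eq:RelEqVelocityC}; by \eqref{eq:RelEqCross3} it equals the second. Then each component of $\mathcal F(u^\Sigma)-\xi L_0(u^\Sigma)$ vanishes, and Lemma~\ref{lem:RelEqOurSystem} (i.e.\ Proposition~\ref{prop:RelEqFundamental}) shows that $u^\Sigma$ is a relative equilibrium. For any $u_0=R_\theta(u^\Sigma)$ on the orbit, Lemma~\ref{lem:RelEqOrbit} applies with the roles reversed: its proof only uses Lemma~\ref{lem:FlowCommutes} and commutativity of $R$, not membership in $\Sigma$. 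This gives $\Phi_t^{\mathcal F}(u_0)=R_{\xi t}(u_0)$, which is \eqref{eq:RelEqCrossSolution}.

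The computations are routine. The one point I expect to need care is that the equivalence in Proposition~\ref{prop:RelEqFundamental} is applied correctly, since it is only cited in the paper. The implication from tangency to orbit holds because $t\mapsto R_{\xi t}(u^*)$ solves $\dot u=\mathcal F(u)$. Indeed,
\begin{equation*}
\frac{d}{dt}R_{\xi t}(u^*)=DR_{\xi t}(u^*)\,\xi L_0(u^*)=DR_{\xi t}(u^*)\,\mathcal F(u^*)=\mathcal F(R_{\xi t}(u^*)),
\end{equation*}
using equivariance (Theorem~\ref{thm:NF-equivariant}). Uniqueness of solutions then identifies this curve with $\Phi^{\mathcal F}_t(u^*)$. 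I would include this one-line check to make the converse self-contained.
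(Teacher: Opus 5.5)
Your proposal is correct and follows the paper's proof essentially step for step: you pass to the cross-section representative via Lemmas~\ref{lem:CrossSection} and~\ref{lem:RelEqOrbit}, split the tangency condition $\mathcal F(u^\Sigma)=\xi L_0(u^\Sigma)$ of Lemma~\ref{lem:RelEqOurSystem} into components, and run the computation backwards for the converse. Your additional remarks are also correct and make the argument slightly more self-contained than the paper's: that $\Omega_0\neq 0$ is implicit in Assumption~\ref{ass:FreqRatio}, that the proof of Lemma~\ref{lem:RelEqOrbit} does not use membership in $\Sigma$, and the equivariance-plus-uniqueness check that tangency implies the orbit relation.
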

\begin{proof}
\emph{First direction.}
Let $u^*$ be a relative equilibrium with $z^*\neq 0$, and let $u^\Sigma\in\Sigma$
be its unique representative as in Lemma~\ref{lem:CrossSection}. By
Lemma~\ref{lem:RelEqOrbit}, $u^\Sigma$ is again a relative equilibrium (with
the same velocity $\xi\in\mathbb{R}$). Lemma~\ref{lem:RelEqOurSystem} then yields
\begin{equation}\label{eq:RelEqCrossStart}
\mathcal{F}(u^\Sigma) = \xi\,L_0(u^\Sigma).
\end{equation}
Write $u^\Sigma=(x^*,\psi^*,w^*)$ with $x^*>0$. Using \eqref{eq:NF-truncated}, the $\psi$-component of \eqref{eq:RelEqCrossStart} gives
\begin{equation*}
\Omega_0+\varepsilon \bar F_\psi(x^*,\psi^*,w^*)
=
\xi\Omega_0,
\end{equation*}
and hence
\begin{equation*}
\xi
=
1+\frac{\varepsilon}{\Omega_0}
\bar F_\psi(x^*,\psi^*,w^*).
\end{equation*}
Similarly, taking the imaginary part of the $z$-component gives
\begin{equation*}
\omega_0x^*
+
\varepsilon\Im\!\left(
\bar F_z(x^*,\psi^*,w^*)
\right)
=
\xi\omega_0x^*,
\end{equation*}
so that
\begin{equation*}
\xi
=
1+
\frac{\varepsilon}{\omega_0x^*}
\Im\!\left(
\bar F_z(x^*,\psi^*,w^*)
\right).
\end{equation*}
This proves \eqref{eq:RelEqVelocityC}. Comparing the remaining components and the real
part of the $z$-component yields \eqref{eq:RelEqCross1}-\eqref{eq:RelEqCross3}. \\

\medskip
\emph{Second direction: } Let $u^\Sigma=(x^*,\psi^*,w^*)\in\Sigma$ with $x^*>0$ satisfy
\eqref{eq:RelEqCross1}-\eqref{eq:RelEqCross3}.
Define
\begin{equation}\label{eq:RelEqXiDef}
\xi := 1 + \frac{\varepsilon}{\Omega_0}\,
\bar F_\psi(x^*,\psi^*,w^*).
\end{equation}
We claim that
\begin{equation}\label{eq:RelEqCrossTangency}
\mathcal{F}(u^\Sigma) = \xi\,L_0(u^\Sigma).
\end{equation}
Indeed, by \eqref{eq:NF-truncated} and \eqref{eq:L0-vector-field} the $\psi$-component
of \eqref{eq:RelEqCrossTangency}  reduces exactly to \eqref{eq:RelEqXiDef}, so it
holds by definition of $\xi$. For the $w$-component, since $(L_0)_w=0$, condition
\eqref{eq:RelEqCross2} gives
$\mathcal F_w(u^\Sigma)=\xi(L_0)_w(u^\Sigma).$
Finally, using
\eqref{eq:RelEqCross1}-\eqref{eq:RelEqCross3} in the $z$ component yields the
remaining equality, so \eqref{eq:RelEqCrossTangency} holds.
By Lemma~\ref{lem:RelEqOurSystem}, \eqref{eq:RelEqCrossTangency} is equivalent to
$u^\Sigma$ being a relative equilibrium with velocity $\xi$. Lemma~\ref{lem:RelEqOrbit}
then implies that every point on the $\mathbb{S}^1$-orbit $\mathcal{O}(u^\Sigma)$
is also a relative equilibrium, and that the corresponding trajectories have the
form \eqref{eq:RelEqCrossSolution}. This proves the theorem.
\end{proof}
\begin{remark}
Condition~\eqref{eq:RelEqCross3} can be interpreted as a frequency-locking condition between the rotation of the complex variable $z$ and the rotation of the phase variable $\psi$.

Indeed, write $z=re^{i\varphi_z}$. At a relative equilibrium represented on the cross-section by $u^\Sigma=(x^*,\psi^*,w^*)$, condition~\eqref{eq:RelEqCross1} implies that the radial component vanishes. Hence the $z$-component has only angular motion, with angular velocity
$$
\dot\varphi_z
=
\omega_0
+
\varepsilon\frac{\Im \bar F_z(x^*,\psi^*,w^*)}{x^*}.
$$
On the other hand, the $(\psi)$-component has angular velocity
$$
\dot\psi
=
\Omega_0
+
\varepsilon \bar F_\psi(x^*,\psi^*,w^*).
$$
Using \eqref{eq:RelEqCross3}, we obtain
$$
\dot\varphi_z
=
\omega_0
+
\varepsilon\frac{\omega_0}{\Omega_0}
\bar F_\psi(x^*,\psi^*,w^*)
=
\frac{\omega_0}{\Omega_0}
\left(
\Omega_0
+
\varepsilon\bar F_\psi(x^*,\psi^*,w^*)
\right)
=
\frac{\omega_0}{\Omega_0}\dot\psi.
$$
Thus \eqref{eq:RelEqCross3} says that the corrected angular velocities of the $z$ and $\psi$ components have the same ratio as the unperturbed frequencies:
$$
\frac{\dot\varphi_z}{\dot\psi}
=
\frac{\omega_0}{\Omega_0}.
$$
In the $1{:}1$ resonant case, $\omega_0=\Omega_0,$ the two perturbed angular velocities are equal. Therefore the rotation in the $z$-plane and the rotation of the solitary phase $\psi$ have the same period.
\end{remark}

\section{Existence and Stability of Resonant Solitary States}\label{sec:Analysis}

We now apply the general construction of Sections~4-6 to the specific reduced system obtained from the Kuramoto model with inertia. In the previous sections, we showed that resonant periodic solutions can be found by solving the cross-section conditions in Theorem~\ref{thm:RelEqCross}. In this section, we investigate near $1{:}1$ resonance, see \eqref{eq:detuning}, between the selected cluster mode and the solitary oscillator.

The goal of this section is to compute what the general theory gives for this specific solitary-node problem. We first write the near-resonant normal form explicitly. We then solve the cross-section equations to obtain the amplitude, phase relation, and frequency correction of the corresponding relative equilibrium. Finally, we analyze stability, to determine whether the resulting resonant solitary state is orbitally stable.

We assume that the unperturbed frequencies are $\varepsilon$-close and write
\begin{equation}
\omega_0=\Omega_0+\varepsilon\Delta,
\label{eq:detuning}
\end{equation}
where $\Delta=\mathcal O(1)$ is a detuning parameter. Under this
assumption, the reduced system \eqref{eq:u} takes the form
\begin{equation}
\label{eq:near-resonant-reduced-system}
\begin{cases}
\displaystyle
\dot z
= i\Omega_0 z
+\varepsilon\,\bigg(i\Delta z-\dfrac{\hat\alpha}{2}\,(z-\bar z)-\dfrac{i\,C_{\ell}}{4\omega_0}\,(z+\bar z)^2
-\;i\,\dfrac{\kappa q}{\omega_0}\,\sin\psi\bigg)
+\mathcal{O}(\varepsilon^2),\\[2mm]
\dot\psi = \Omega_0 +\varepsilon w+\mathcal{O}(\varepsilon^2),\\[1mm]
\displaystyle
\dot w = \varepsilon\bigl(\hat P_N-\hat\alpha(\Omega+w)\bigr)+\mathcal{O}(\varepsilon^2).
\end{cases}
\end{equation}
We now apply Theorem \ref{thm:first-order-normal-form} from Section~\ref{sec:NF-transformations}. This theorem states that, up to an error of the order $\mathcal O(\varepsilon^2)$, \eqref{eq:near-resonant-reduced-system} is locally conjugate to its resonant projection, obtained by keeping only the order-$\varepsilon$ terms that commute with $L_0$.  To order $\mathcal{O}(\varepsilon)$ the normal form of  \eqref{eq:near-resonant-reduced-system} reads
\begin{equation}\label{eq:NForig}
\begin{cases}
\displaystyle
\dot z
= i\Omega_0 z
+\varepsilon\Big(
(i\Delta-\tfrac{\hat\alpha}{2})\,z
-\dfrac{\kappa q}{2\Omega_0}\, e^{i\psi}
\Big)
+\mathcal{O}(\varepsilon^2),\\[2mm]
\dot\psi = \Omega_0 +\varepsilon w +\mathcal{O}(\varepsilon^2),\\[1mm]
\displaystyle
\dot w = \varepsilon\bigl(\hat P_N-\hat\alpha(\Omega+w)\bigr)+\mathcal{O}(\varepsilon^2).
\end{cases}
\end{equation}\label{eq:near-resonant-nf-system}
We now apply the cross-section conditions from
Theorem~\ref{thm:RelEqCross} to  \eqref{eq:NForig}. This gives an algebraic criterion for the
existence of a nontrivial relative equilibrium, together with an explicit
formula for it.

\begin{theorem}\label{thm:RelEqNFOrig}
Consider the system \eqref{eq:NForig} after truncation at order
$\mathcal O(\varepsilon)$. This system admits a nontrivial relative
equilibrium with $z^*\neq 0$ if and only if
\begin{equation*}
\hat\alpha\neq 0,
\qquad
\kappa q\neq 0.
\end{equation*}
Under these conditions, the relative equilibrium has a representative in the
cross-section $\Sigma=\{(z,\psi,w):z=x\in\mathbb R^+\}$ by $u^\Sigma=(x^*,\psi^*,w^*)$, where
\begin{align}
w^*
&=
\frac{\hat P_N}{\hat\alpha}-\Omega ,
\label{eq:wstar}\\[0.5em]
x^*
&=
\left\lvert
\frac{\kappa q}
{\Omega_0
\sqrt{\hat\alpha^2+
4\left(\frac{\hat P_N}{\hat\alpha}-\Omega-\Delta\right)^2}}
\right\rvert ,
\label{eq:xstar}\\[0.5em]
\cos\psi^*
&=
-\operatorname{sgn}(\kappa q)\,
\frac{\hat\alpha}
{\sqrt{\hat\alpha^2+
4\left(\frac{\hat P_N}{\hat\alpha}-\Omega-\Delta\right)^2}},
\label{eq:psistar-cos}\\[0.5em]
\sin\psi^*
&=
-\operatorname{sgn}(\kappa q)\,
\frac{2\left(\frac{\hat P_N}{\hat\alpha}-\Omega-\Delta\right)}
{\sqrt{\hat\alpha^2+
4\left(\frac{\hat P_N}{\hat\alpha}-\Omega-\Delta\right)^2}}.
\label{eq:psistar-sin}
\end{align}
Its group velocity is
\begin{equation*}\nonumber
\xi^*
=
1+\frac{\varepsilon}{\Omega_0}w^*.
\label{eq:cstar-analysis}
\end{equation*}
The full relative equilibrium is the $S^1$-orbit of
$u^\Sigma$ generated by the symmetry action.
\end{theorem}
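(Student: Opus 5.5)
The plan is to apply Theorem~\ref{thm:RelEqCross} directly to the truncated system \eqref{eq:NForig}.

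\textbf{Setting up the splitting.} We treat \eqref{eq:NForig} in the exact $1{:}1$ form $\mathcal F=L_0+\varepsilon\bar F$ with
\[
L_0(z,\psi,w)=(i\Omega_0 z,\ \Omega_0,\ 0).
\]
The detuning $i\Delta z$ is absorbed into the order-$\varepsilon$ part. The symmetry is then $R_\theta(z,\psi,w)=(e^{i\Omega_0\theta}z,\psi+\Omega_0\theta,w)$, a genuine $S^1$-action with frequency ratio $1$. Hence Lemma~\ref{lem:CrossSection} gives a unique representative in $\Sigma$. We read off
\[
\bar F_z=(i\Delta-\tfrac{\hat\alpha}{2})z-\tfrac{\kappa q}{2\Omega_0}e^{i\psi},\qquad
\bar F_\psi=w,\qquad
\bar F_w=\hat P_N-\hat\alpha(\Omega+w).
\]
Before going further, we check that each term is indeed $R_\theta$-equivariant by Lemma~\ref{lem:resonant-equivariant-monomials}: the term $z$ has $a=1$, and $e^{i\psi}$ has $k=1$ with $\omega_0$ replaced by $\Omega_0$.

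\textbf{Evaluating the three conditions at $u^\Sigma=(x,\psi,w)$ with $x>0$.} Condition \eqref{eq:RelEqCross2} reads $\hat P_N-\hat\alpha(\Omega+w)=0$.
\begin{itemize}
\item If $\hat\alpha=0$, this becomes $\hat P_N=0$, which is incompatible with the scaling regime, since $\hat P_N/\hat\alpha$ must be finite. So no relative equilibrium exists.
\item If $\hat\alpha\neq0$, it forces $w^*=\hat P_N/\hat\alpha-\Omega$, which is \eqref{eq:wstar}.
\end{itemize}
Condition \eqref{eq:RelEqCross1} gives
\[
-\tfrac{\hat\alpha}{2}x-\tfrac{\kappa q}{2\Omega_0}\cos\psi=0 .
\]
Condition \eqref{eq:RelEqCross3}, with ratio $1$ and $\bar F_\psi=w^*$, gives
\[
\Delta x-\tfrac{\kappa q}{2\Omega_0}\sin\psi=w^*x ,\quad\text{i.e.}\quad \tfrac{\kappa q}{2\Omega_0}\sin\psi=(\Delta-w^*)x .
\]
If $\kappa q=0$, these two equations force $\hat\alpha x=0$, hence $x=0$, so there is no nontrivial solution. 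This settles the necessity of both conditions $\hat\alpha\neq0$ and $\kappa q\neq0$.

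\textbf{Solving when $\hat\alpha\neq0$ and $\kappa q\neq0$.} Solving the two equations for the trigonometric functions gives
\[
\cos\psi=-\frac{\hat\alpha\Omega_0 x}{\kappa q},\qquad
\sin\psi=-\frac{2\Omega_0 x\,(w^*-\Delta)}{\kappa q}.
\]
Imposing $\cos^2\psi+\sin^2\psi=1$ gives a unique positive root
\[
x^*=\frac{|\kappa q|}{\Omega_0\sqrt{\hat\alpha^2+4(w^*-\Delta)^2}} ,
\]
which is \eqref{eq:xstar}. Substituting $x^*$ back gives \eqref{eq:psistar-cos} and \eqref{eq:psistar-sin}, where the factor $\operatorname{sgn}(\kappa q)$ comes from $|\kappa q|/\kappa q$.

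\textbf{Group velocity and sufficiency.} Formula \eqref{eq:RelEqVelocityC} gives $\xi^*=1+\varepsilon w^*/\Omega_0$. The converse part of Theorem~\ref{thm:RelEqCross} then shows that $u^\Sigma$ is a relative equilibrium and that its whole orbit consists of relative equilibria, which gives sufficiency.

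\textbf{Main obstacle.} The only delicate point is justifying the use of Theorem~\ref{thm:RelEqCross}, since it was stated with $L_0$ built from $\omega_0$. We will argue that this is only bookkeeping: the theorem requires nothing beyond $[L_0,\bar F]=0$, and rewriting $\omega_0=\Omega_0+\varepsilon\Delta$ preserves the full vector field $\mathcal F$. The real-form computations themselves are routine.
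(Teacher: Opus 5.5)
Your proposal is correct and follows the paper's proof essentially step for step. You read off $\bar F$ from \eqref{eq:NForig} with the detuning absorbed into the order-$\varepsilon$ part, which the paper does implicitly. You then impose \eqref{eq:RelEqCross1}--\eqref{eq:RelEqCross3} to get $w^*$ and then $\cos\psi^*,\sin\psi^*$ linear in $x^*$, normalize with $\cos^2\psi^*+\sin^2\psi^*=1$, and invoke the converse half of Theorem~\ref{thm:RelEqCross} for sufficiency and the group velocity.

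One small caveat, shared with the paper, concerns the necessity of $\hat\alpha\neq0$. It comes from the standing assumption that $\hat P_N/\hat\alpha$, and hence $\Omega_0$ and $w^*$, is defined, not from the cross-section equations. For $\hat\alpha=\hat P_N=0$ and $\kappa q\neq0$ those equations would admit the family $\cos\psi=0$, $x=|\kappa q|/(2\Omega_0|\Delta-w|)$ with $w\neq\Delta$. You should therefore phrase that step as an appeal to the setup, not as a contradiction derived from $\hat P_N=0$.
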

\begin{proof}
After truncation at order $\mathcal O(\varepsilon)$, the system
\eqref{eq:NForig} satisfies
\begin{equation*}
\bar F_z(z,\psi,w)
=
\left(i\Delta-\frac{\hat\alpha}{2}\right)z
-
\frac{\kappa q}{2\Omega_0}e^{i\psi},
\qquad
\bar F_\psi(z,\psi,w)=w,
\end{equation*}
and
\begin{equation*}
\bar F_w(z,\psi,w)
=
\hat P_N-\hat\alpha(\Omega+w).
\end{equation*}
Since the truncated system is $S^1$-symmetric, its relative
equilibria are characterized by the cross-section conditions of
Theorem~\ref{thm:RelEqCross}.

Let
\begin{equation*}
u^\Sigma=(x^*,\psi^*,w^*)\in\Sigma,
\qquad x^*>0.
\end{equation*}
Condition \eqref{eq:RelEqCross2} gives
\begin{equation*}
\hat P_N-\hat\alpha(\Omega+w^*)=0.
\end{equation*}
Thus, provided $\hat\alpha\neq0$, we obtain
\begin{equation*}
w^*
=
\frac{\hat P_N}{\hat\alpha}-\Omega .
\end{equation*}
Next, using $z=x^*\in\mathbb R^+$, the first cross-section condition
\eqref{eq:RelEqCross1} gives
\begin{equation*}
-\frac{\hat\alpha}{2}x^*
-
\frac{\kappa q}{2\Omega_0}\cos\psi^*
=
0.
\label{eq:proof-cos-condition}
\end{equation*}
The third cross-section condition \eqref{eq:RelEqCross3} gives
\begin{equation*}
\Delta
-
\frac{\kappa q}{2\Omega_0x^*}\sin\psi^*
=
w^*.
\label{eq:proof-sin-condition}
\end{equation*}
Equivalently,
\begin{align}
\cos\psi^*
&=
-\frac{\Omega_0\hat\alpha x^*}{\kappa q},
\label{eq:proof-cos}\\[0.4em]
\sin\psi^*
&=
-\frac{2\Omega_0(w^*-\Delta)x^*}{\kappa q}.
\label{eq:proof-sin}
\end{align}
These equations require $\kappa q\neq0$ in the nontrivial case
$x^*>0$.

Using the identity $\sin^2\psi^*+\cos^2\psi^*=1$ in
\eqref{eq:proof-cos}-\eqref{eq:proof-sin}, we get
\begin{equation*}
\left(\frac{\Omega_0\hat\alpha x^*}{\kappa q}\right)^2
+
\left(\frac{2\Omega_0(w^*-\Delta)x^*}{\kappa q}\right)^2
=
1.
\end{equation*}
Hence
\begin{equation*}
x^*
=
\left\lvert
\frac{\kappa q}
{\Omega_0\sqrt{\hat\alpha^2+4(w^*-\Delta)^2}}
\right\rvert .
\end{equation*}
Substituting
\begin{equation*}
w^*
=
\frac{\hat P_N}{\hat\alpha}-\Omega
\end{equation*}
gives
\begin{equation*}
x^*
=
\left\lvert
\frac{\kappa q}
{\Omega_0
\sqrt{\hat\alpha^2+
4\left(\frac{\hat P_N}{\hat\alpha}-\Omega-\Delta\right)^2}}
\right\rvert ,
\end{equation*}
which is \eqref{eq:xstar}.

Substituting this value of $x^*$ into
\eqref{eq:proof-cos}-\eqref{eq:proof-sin} gives
\begin{align*}
\cos\psi^*
&=
-\operatorname{sgn}(\kappa q)\,
\frac{\hat\alpha}
{\sqrt{\hat\alpha^2+
4\left(\frac{\hat P_N}{\hat\alpha}-\Omega-\Delta\right)^2}},
\\[0.4em]
\sin\psi^*
&=
-\operatorname{sgn}(\kappa q)\,
\frac{2\left(\frac{\hat P_N}{\hat\alpha}-\Omega-\Delta\right)}
{\sqrt{\hat\alpha^2+
4\left(\frac{\hat P_N}{\hat\alpha}-\Omega-\Delta\right)^2}},
\end{align*}
which are \eqref{eq:psistar-cos} and \eqref{eq:psistar-sin}. These two
relations determine $\psi^*$ modulo $2\pi$.

Conversely, suppose $\hat\alpha\neq0$ and $\kappa q\neq0$, and define
$x^*$, $\psi^*$, and $w^*$ by
\eqref{eq:xstar}, \eqref{eq:psistar-cos}, \eqref{eq:psistar-sin}, and
\eqref{eq:wstar}. Then the above computations can be read backwards:
\eqref{eq:RelEqCross2} holds by the definition of $w^*$, while
\eqref{eq:RelEqCross1} and \eqref{eq:RelEqCross3} hold by the definitions
of $x^*$ and $\psi^*$. Therefore the cross-section conditions of
Theorem~\ref{thm:RelEqCross} are satisfied, and $u^\Sigma$ is a relative
equilibrium of the truncated system.

Finally, since $\bar F_\psi(u^\Sigma)=w^*$, the group velocity is
\begin{equation*}
\xi
=
1+\frac{\varepsilon}{\Omega_0}\bar F_\psi(u^\Sigma)
=
1+\frac{\varepsilon}{\Omega_0}w^*.
\end{equation*}
The corresponding relative equilibrium in the full phase space is obtained
by applying the $S^1$-action to $u^\Sigma$.\end{proof}
The previous theorem gives existence and explicit formulas, but it does not yet determine whether the corresponding resonant solitary state is stable. 
\begin{proposition}
Assume that $\hat\alpha>0$. Then the nontrivial relative
equilibrium given in Theorem~\ref{thm:RelEqNFOrig} is transversely
asymptotically stable.
\end{proposition}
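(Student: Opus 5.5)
The approach is to pass to a co-rotating frame in which the relative equilibrium becomes an ordinary equilibrium, and then read off stability from the linearization there. Set $z=e^{i\psi}\zeta$; this is the natural reduction for the $1{:}1$ action, since $R_\theta$ rotates $z$ and $\psi$ at the same rate $\Omega_0$. The truncated system \eqref{eq:NForig} then becomes
\begin{equation*}
\dot\zeta=\varepsilon\Big(\big(i(\Delta-w)-\tfrac{\hat\alpha}{2}\big)\zeta-\frac{\kappa q}{2\Omega_0}\Big),\qquad
\dot w=\varepsilon\big(\hat P_N-\hat\alpha(\Omega+w)\big),
\end{equation*}
with $\dot\psi=\Omega_0+\varepsilon w$ decoupled. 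The pair $(\zeta,w)$ consists of $R_\theta$-invariant coordinates and serves as coordinates on the orbit space near the orbit, which is the transversal slice. The relative equilibrium of Theorem~\ref{thm:RelEqNFOrig} corresponds exactly to the equilibrium
\begin{equation*}
w=w^*,\qquad \zeta^*=x^*e^{-i\psi^*}=\frac{\kappa q}{2\Omega_0\big(i(\Delta-w^*)-\hat\alpha/2\big)},
\end{equation*}
and one can check that this is consistent with \eqref{eq:xstar}--\eqref{eq:psistar-sin}.

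Transverse stability is then the stability of this equilibrium in the three real dimensions $(\Re\zeta,\Im\zeta,w)$. The remaining fourth direction is the group orbit, along which the eigenvalue is trivially $0$.

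The Jacobian at the equilibrium is block triangular. The $w$-equation does not depend on $\zeta$, so it contributes the eigenvalue $-\varepsilon\hat\alpha$. The $\zeta$-block, at fixed $w=w^*$, is multiplication by $\varepsilon\big(i(\Delta-w^*)-\hat\alpha/2\big)$. Viewed as a real $2\times2$ matrix, its eigenvalues are $\varepsilon\big(-\hat\alpha/2\pm i(\Delta-w^*)\big)$. The coupling term $-i\varepsilon\zeta^*\,\delta w$ only fills the off-diagonal block and does not change the spectrum. Hence, when $\hat\alpha>0$, all transverse eigenvalues have real part $\le -\varepsilon\hat\alpha/2<0$. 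This gives exponential convergence of nearby orbits to the group orbit, with asymptotic phase, which is the transversal asymptotic stability claimed.

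As a sanity check, I will also compute the Floquet multipliers of the periodic orbit in the original coordinates. These should be $1$ together with $e^{\mu T'}$, where $\mu$ ranges over the transverse eigenvalues above.

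The main obstacle is making the co-rotating reduction rigorous. The identification of the transverse linearization with the linearization of the reduced $(\zeta,w)$ system requires that $(\zeta,w)$ be a valid local chart of $X/S^1$ near the orbit. This holds because $z^*\neq0$, so the action is free near the orbit, and the map $(z,\psi,w)\mapsto(\zeta,w)$ is a submersion whose fibers are the $R_\theta$-orbits.

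If I cannot use the co-rotating substitution directly, I will instead linearize $\mathcal F-\xi^*L_0$ at $u^\Sigma$. The $L_0(u^\Sigma)$ direction gives the zero eigenvalue, and I will compute the spectrum on a complement. By equivariance, this reproduces the same eigenvalues.
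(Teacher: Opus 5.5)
Your argument is correct, but it is organized differently from the paper's.

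\textbf{The paper's route.} The paper stays in the full four-dimensional phase space. It writes the perturbation in a frame co-moving with the group orbit, $z=e^{i\Omega_0\xi t}(x^*+u+iv)$ and $\psi=\psi^*+\Omega_0\xi t+\sigma$, with $w$ perturbed about $w^*$. This is exactly the linearization of $\mathcal F-\xi L_0$ at $u^\Sigma$, i.e. your fallback. It yields a block-triangular $4\times4$ Jacobian with eigenvalues $\varepsilon(-\hat\alpha/2\pm i(w^*-\Delta))$, $0$ and $-\varepsilon\hat\alpha$. The zero eigenvalue belongs to the direction $(u,v,\sigma)\propto(0,x^*,1)$, which is tangent to the orbit.

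\textbf{Your route.} You quotient out the symmetry first, using the invariant $e^{-i\psi}z$. The relative equilibrium then becomes a genuine hyperbolic equilibrium of a three-dimensional system. The same three nonzero eigenvalues appear, with no neutral direction to set aside.

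\textbf{What each buys.} Your route does not need to pick out the group direction among the eigenvectors. It also does not need a normal-hyperbolicity argument for the circle of equilibria that the co-moving frame produces. It also gives more: $\dot w=-\varepsilon\hat\alpha(w-w^*)$ decouples, and the equation for $e^{-i\psi}z$ is affine in that variable. Hence you get global convergence to the relative equilibrium within the truncated normal form, not only local stability. The paper's route needs no choice of invariants and carries over unchanged to other resonance ratios. There, the analogue of your chart would be built from $z^n e^{-im\psi}$ and is valid only away from $z=0$.

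\textbf{A minor point.} For the $1{:}1$ action used here, $R_\theta$ is free everywhere, because it translates $\psi$. Moreover, $(z,\psi,w)\mapsto(e^{-i\psi}z,w)$ is a global submersion whose fibres are exactly the $R_\theta$-orbits. So the hypothesis $z^*\neq0$ is not what makes your chart valid; it matters for the paper's cross-section $\Sigma$, not for your reduction.
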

\begin{proof}
Let $u^\Sigma=(x^*,\psi^*,w^*)\in\Sigma$ be the representative of the relative equilibrium. By
Theorem~\ref{thm:RelEqNFOrig}, its group velocity is
\begin{equation}
\label{eq:velocity}
\xi
=
1+\frac{\varepsilon}{\Omega_0}w^*,
\end{equation}
and the corresponding relative-equilibrium solution is given by
\begin{equation*}
R_{\xi t}(u^\Sigma)
=
\left(
e^{i\Omega_0\xi t}x^*,
\psi^*+\Omega_0\xi t,
w^*
\right).
\end{equation*}

We introduce a perturbation in a frame moving with this group orbit by
writing
\begin{equation}
\label{eq:perturbation}
\begin{pmatrix}
z(t)\\
\psi(t)\\
w(t)
\end{pmatrix}
=
\begin{pmatrix}
e^{i\Omega_0\xi t}\left(x^*+u(t)+iv(t)\right)\\
\psi^*+\Omega_0\xi t+\sigma(t)\\
w^*+\zeta(t)
\end{pmatrix}.
\end{equation}
We differentiate \eqref{eq:perturbation}
and substitute \eqref{eq:perturbation} into \eqref{eq:NForig}, which gives
\begin{equation}\label{eq:zcomp_pert}
\dot u+i\dot v
=
\varepsilon
\left[
\left(
i(\Delta-w^*)-\frac{\hat\alpha}{2}
\right)
\left(x^*+u+iv\right)
-
\frac{\kappa q}{2\Omega_0}
e^{i\psi^*}e^{i\sigma}
\right]
\end{equation}
\begin{equation*}
\dot\sigma=\varepsilon\zeta,
\qquad
\dot\zeta=-\varepsilon\hat\alpha\zeta,
\end{equation*}
Since $u^\Sigma$ is a relative equilibrium, its $z$-component satisfies
\begin{equation}\label{eq:zcomp_equilibrium}
\left(
i(\Delta-w^*)-\frac{\hat\alpha}{2}
\right)x^*
-
\frac{\kappa q}{2\Omega_0}e^{i\psi^*}
=
0.
\end{equation}
Subtracting \eqref{eq:zcomp_equilibrium} from
\eqref{eq:zcomp_pert} yields
\begin{equation*}
\dot u+i\dot v
=
\varepsilon
\left[
\left(
i(\Delta-w^*)-\frac{\hat\alpha}{2}
\right)(u+iv)
-
\frac{\kappa q}{2\Omega_0}e^{i\psi^*}
\left(e^{i\sigma}-1\right)
\right].
\end{equation*}

We now linearize at $(u,v,\sigma,\zeta)=(0,0,0,0).$ Since $e^{i\sigma}-1
=
i\sigma+\mathcal O(\sigma^2),$
the  linearized $z$-equation becomes
\begin{equation*}\label{eq:zcomp_pert2}
\dot u+i\dot v
=
\varepsilon
\left[
\left(
i(\Delta-w^*)-\frac{\hat\alpha}{2}
\right)(u+iv)
-
i\frac{\kappa q}{2\Omega_0}e^{i\psi^*}\sigma
\right]
+
\mathcal O\left(
\varepsilon\|(u,v,\sigma,\zeta)\|^2
\right).
\end{equation*}
Using \eqref{eq:zcomp_equilibrium} again 
and separating real and imaginary parts yields
\begin{equation*}
\frac{d}{dt}
\begin{pmatrix}
u\\
v\\
\sigma\\
\zeta
\end{pmatrix}
=
\varepsilon
\begin{pmatrix}
-\hat\alpha/2 & w^*-\Delta & -(w^*-\Delta)x^* & 0\\
-(w^*-\Delta) & -\hat\alpha/2 & \hat\alpha x^*/2 & 0\\
0 & 0 & 0 & 1\\
0 & 0 & 0 & -\hat\alpha
\end{pmatrix}
\begin{pmatrix}
u\\
v\\
\sigma\\
\zeta
\end{pmatrix}
+
\mathcal O\left(
\varepsilon\|(u,v,\sigma,\zeta)\|^2
\right).
\end{equation*}
The eigenvalues of the Jacobian matrix are
\begin{equation*}
\lambda_{1,2}
=
\varepsilon
\left(
-\frac{\hat\alpha}{2}
\pm i(w^*-\Delta)
\right),
\qquad
\lambda_3=0,
\qquad
\lambda_4=-\varepsilon\hat\alpha.
\end{equation*}
For $\varepsilon>0$ and $\hat\alpha>0$, all eigenvalues transverse to the
group orbit have strictly negative real parts. Therefore, the relative
equilibrium is transversely asymptotically stable.
\end{proof}
\begin{remark}
    The relative equilibrium of the truncated normal form is transversely hyperbolic, since $\hat\alpha\neq 0$. The full normal-form system differs from the truncated system by higher-order terms of order $\mathcal O(\varepsilon^2)$. Therefore, for sufficiently small $\varepsilon$, these terms only perturb the location of the relative equilibrium and its transverse eigenvalues. Consequently, the corresponding nearby solution of the full system has the same transverse stability type. In particular, if $\hat\alpha>0$, the truncated normal form predicts a nearby orbitally stable resonant solitary state of the full system, whereas if $\hat\alpha<0$, the corresponding state is transversely unstable.

\end{remark}
\begin{figure}[t]
\centering
\includegraphics[width=\textwidth]{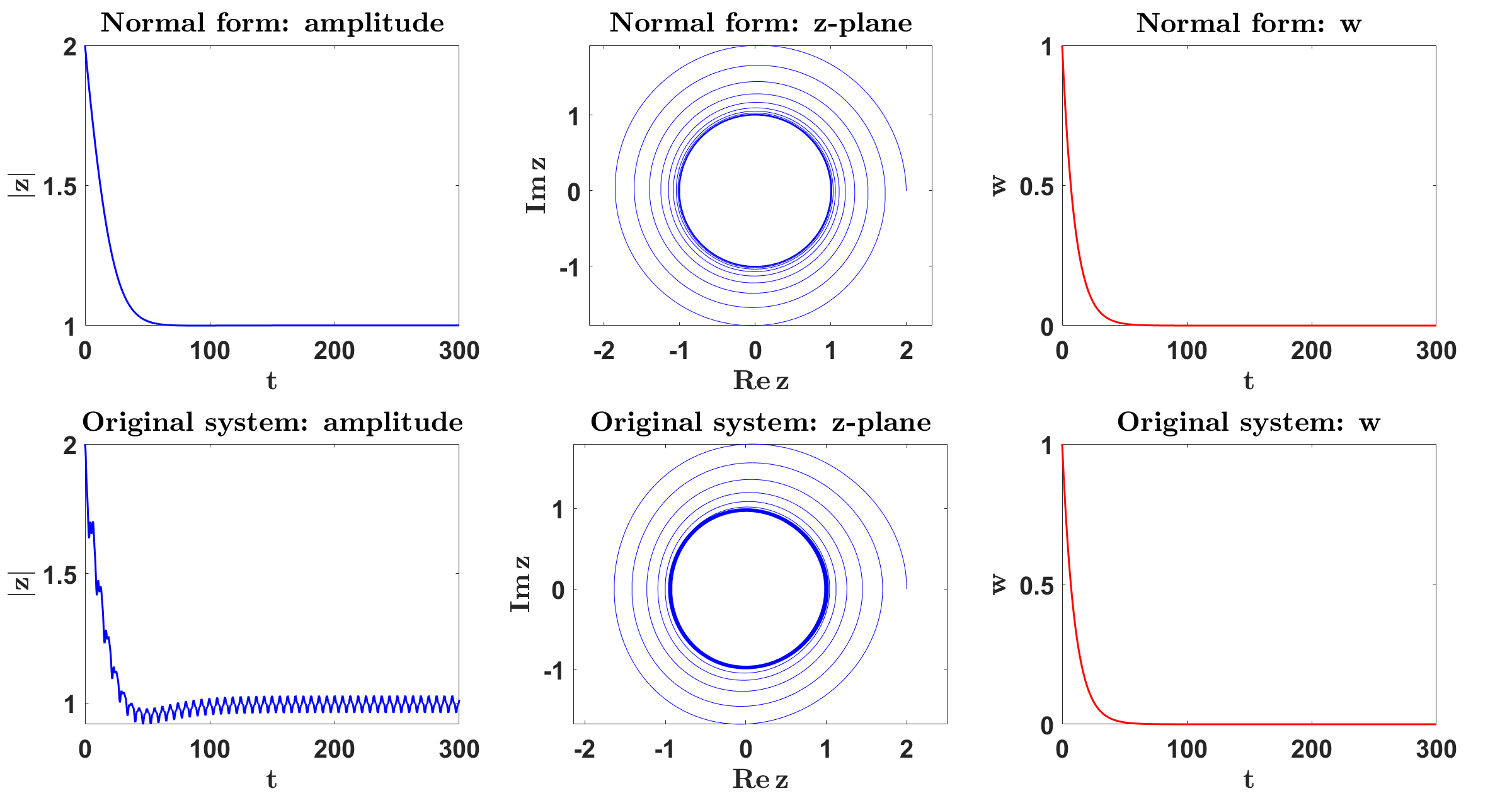}
\caption{Comparison of the truncated normal form in \eqref{eq:NForig} and the original (reduced) system in \eqref{eq:near-resonant-reduced-system} for $\varepsilon=0.1$, $\Omega_0=\Omega=1$, $\Delta=0$, $\hat{\alpha}=\hat P_N=q=\kappa=C_{\ell}=1$. The relative equilibrium satisfies $|z|=1$, $\psi=\pi$, and $w^*=0$. Top row: truncated normal form. Bottom row: original reduced system. Left column: evolution of the oscillator amplitude $|z|$. Middle column: trajectory in the complex $z$-plane. Right column: evolution of the solitary oscillator velocity $w$.}
\label{fig:numerics}
\end{figure}
Figure~\ref{fig:numerics} illustrates the stability of the
relative equilibrium for a representative choice of parameters. In both systems, the trajectory approaches the orbit associated with the resonant solitary state and $w$ converges to its equilibrium value. The truncated normal form \eqref{eq:NForig} exhibits monotone convergence of $|z|$ to its limiting value, whereas the original reduced system in \eqref{eq:near-resonant-reduced-system} shows small oscillatory corrections due to the non-resonant and higher-order terms neglected in \eqref{eq:NForig}.

\section{Conclusion and Discussion}\label{sec:Conclusion}

In this paper we introduced a normal-form and symmetry-based method for detecting periodic solutions in weakly perturbed oscillator-rotator systems. The main idea is to use normal form transformations to introduce symmetry, and reinterpret the relevant periodic solutions as relative equilibria of this symmetry action. This converts the search for periodic solutions into a finite-dimensional algebraic problem.

The method is general in the following sense. Once a system has been reduced to the form
$$
\dot u=L_0(u)+\varepsilon F(u)+\mathcal O(\varepsilon^2),
\qquad
u\in \mathbb C\times\mathbb S^1\times\mathbb R,$$
with an unperturbed oscillator-rotator flow generated by $L_0$, the resonant normal form selects the terms that commute with this flow. The resulting truncated system is equivariant under the corresponding $S^1$-action. Periodic solutions can then be found as relative equilibria, rather than by solving a self-consistency problem as in \cite{niehues_resonant_2024}. 

We applied this method to resonant solitary states in the Kuramoto model with inertia. Starting from a network with one solitary node attached to a synchronized cluster, we expanded around the synchronized cluster state, projected onto Laplacian eigenmodes, and retained a dominant resonant mode. This reduced the high-dimensional network dynamics to a harmonic oscillator coupled to a rotator. Applying the normal-form and relative-equilibrium method to this reduced system gave algebraic conditions for the existence of resonant solitary states.

For the near $1{:}1$ resonance, the relative-equilibrium conditions can be
solved explicitly. We determine the amplitude of the resonant
cluster mode, its phase relation with the solitary oscillator, and the
correction to the solitary frequency. Moreover, the normal form allows the
stability of this solution to be studied analytically. For positive damping, the relative equilibrium is
asymptotically stable modulo the $S^1$-symmetry.

Our method provides a different description of resonant solitary states from the
self-consistency approach in~\cite{niehues_resonant_2024}. In that work, resonant solitary frequencies are identified by requiring consistency between the periodic forcing generated by the solitary oscillator and the response of the synchronized cluster. Here, after reduction and resonant normal-form transformation, these states are identified as relative equilibria of an $S^1$-equivariant vector field. Their existence is therefore reduced to explicit algebraic conditions on a cross-section, while their transverse dynamics can be studied
directly from the reduced vector field. 

Several extensions of the present framework are natural. Here, we have considered the minimal configuration of a single solitary oscillator weakly coupled to one synchronized cluster and focused on a single resonant cluster mode. A first direction is to extend the reduction to networks containing several solitary oscillators or several synchronized clusters, possibly of different sizes and mean frequencies. Such configurations introduce several interacting rotator and oscillator degrees of freedom and may lead to higher-dimensional symmetry groups and more complicated resonance relations. This setting would also include collective states such as Cyclops states, which consist of two coherent clusters and a single solitary oscillator \cite{munyayev_cyclops_2023}, as well as more general multiclusters. It would be particularly interesting to determine whether these states can likewise be characterized as relative equilibria, or more generally as relative periodic solutions, of an appropriate resonant normal form. A second direction is to study how these solutions are created or destroyed as parameters vary. The normal-form formulation developed here provides a  starting point for analyzing bifurcations of resonant solitary and multicluster states and for determining how different solutions appear and are destroyed.

\section*{Acknowledgements}

We thank Jakob Niehues for many helpful discussions and insightful conversations related to this work.

\section*{Declaration on Generative AI Use}

Generative AI tools were used during the preparation of this manuscript
for language editing and editorial assistance. The authors reviewed and
verified the final manuscript and take full responsibility for its content.

\appendix
\section{Appendix: Reduction to Two Coupled Equations} \label{app:Reduction}
In this appendix we revisit the calculations presented in the supplementary material of \cite{niehues_resonant_2024}. Our overall procedure is similar, but instead of deriving the orders of magnitude of the parameters, we assume them from the start. We will start with the model given in \eqref{eq:ScaledModel}. Recall that for $\kappa_{kN}=0$, we assume that the uncoupled cluster dynamics in \eqref{eq:InitialSystem} admits a stable frequency synchronized solution \begin{equation}\label{eq:syncSol}
\phi_i(t)=\phi_i^*+\Omega t, \qquad i\in \mathcal S= \{ 1,2,\dots N-1\}.
\end{equation}
which satisfies the relation
\begin{equation} \label{eq:cluster-ansatz}
0
= P_i - \alpha\,\Omega
+ \sum_{j=1}^{N-1} K_{ij}\,\sin(\phi_j^*-\phi_i^*) \quad \text{for all}\quad  i \in \mathcal{S}.
\end{equation}

\subsection{Approximation near a solitary state} \label{approximation_near_solitary_state}
In this section, we derive the equations presented in Subsection~\ref{subsec:newcoordinates}. 
We perturb the cluster phases around the synchronized solution \eqref{eq:syncSol} by writing
\begin{equation}\label{app:clusterperturbed}
\phi_i(t)
=
\phi_i^*+\Omega t+\varepsilon \nu_i(t),
\qquad i\in\mathcal S,
\end{equation}
where $0<\varepsilon\ll 1 $ and we introduce the co-rotating solitary phase $\psi_N(t)$ by writing
\begin{equation}\label{app:solitarycorotating}
\phi_N(t)
=
\psi_N(t)+\phi_k^*+\Omega t.
\end{equation}
Thus $\psi_N$ is the phase of the solitary oscillator relative to the synchronized cluster, measured in the co-rotating frame of the attachment node $k$.

We now substitute \eqref{app:clusterperturbed} and \eqref{app:solitarycorotating} into the rescaled solitary-oscillator equation in \eqref{eq:ScaledModel}. Since
\begin{equation*}
\phi_k-\phi_N
=
\phi_k^*+\Omega t+\varepsilon\nu_k
-
\bigl(\psi_N+\phi_k^*+\Omega t\bigr)
=
\varepsilon\nu_k-\psi_N,
\end{equation*}
the coupling term \eqref{eq:ScaledModel} becomes
\begin{equation}\label{eq:SinAround}
\sin(\phi_k-\phi_N)
=
\sin(\varepsilon\nu_k-\psi_N)=-\sin(\psi_N) + \mathcal{O}(\varepsilon).
\end{equation}
Expanding \eqref{eq:ScaledModel} up to order $\varepsilon^2$, therefore gives the following equation for the solitary oscillator:
\begin{equation*}
\ddot{\psi}_N
= \varepsilon\,(\hat P_N-\hat\alpha \Omega
  - \,\hat\alpha\,\dot{\psi}_N)
  - \varepsilon^2\,\kappa_{Nk}\,\sin\psi_N
  + \mathcal{O}(\varepsilon^3).
\label{eq:psi-eq}
\end{equation*}
Similarly we insert \eqref{app:clusterperturbed} into the equation for the cluster in \eqref{eq:ScaledModel} and expand the sine terms to obtain
\begin{align}
\ddot{\nu}_i
&= \sum_{j=1}^{N-1}\kappa_{ij}\,
   \cos(\phi_j^*-\phi_i^*)(\nu_j-\nu_i)\notag\\
&\quad + \varepsilon\Bigg[
   -\hat\alpha\,\dot{\nu}_i
   - \frac{1}{2}\sum_{j=1}^{N-1}\kappa_{ij}\,
      \sin(\phi_j^*-\phi_i^*)(\nu_j-\nu_i)^2
   + \,\delta_{ik}\kappa_{kN}\,\sin\psi_N
   \Bigg]\notag\\
&\quad  + \mathcal{O}(\varepsilon^2),
\label{eq:nu}
\end{align}
where the second line follows from \eqref{eq:SinAround}. We define the graph Laplacian $L\in\mathbb{R}^{(N-1)\times(N-1)}$ as
\begin{equation*}
L_{ij} =
\begin{cases}
-\kappa_{ij}\,\cos(\phi_j^*-\phi_i^*), & i\neq j,\\[6pt]
\displaystyle\sum_{j\neq i}\kappa_{ij}\,\cos(\phi_j^*-\phi_i^*), & i=j,
\end{cases}
\label{eq:L-def}
\end{equation*}
and we introduce the notation
\begin{equation}
\bigl(N(\nu)\bigr)_i
:= - \frac{1}{2}\sum_{j=1}^{N-1}\kappa_{ij}\,
      \sin(\phi_j^*-\phi_i^*)(\nu_j-\nu_i)^2.
\label{eq:N-def}
\end{equation}
Then, \eqref{eq:nu} becomes
\begin{equation}\label{eq:nu2}
\ddot{\nu}_i + \sum_{j=1}^{N-1} L_{ij}\,\nu_j
= \varepsilon\Big(
      -\hat\alpha\,\dot{\nu}_i
      + \bigl(N(\nu)\bigr)_i
      + \delta_{ik}\kappa_{kN}\,\sin\psi_N\,
   \Big)
  + \mathcal{O}(\varepsilon^2).
\end{equation}
where $\bigl(N(\nu)\bigr)_i$ is given by \eqref{eq:N-def}.

\subsection{Laplacian Eigenmodes}
In this section, we rewrite equation \eqref{eq:nu2} in terms of the eigenmodes of the graph Laplacian.
Since the underlying graph is undirected, $L\in\mathbb{R}^{(N-1)\times(N-1)}$ is symmetric and hence orthogonally diagonalizable. That is,
\begin{equation*}
L = Q\Lambda Q^\top,
\end{equation*}
where $\Lambda=\mathrm{diag}(\lambda_1,\dots,\lambda_{N-1})$ and $Q\in\mathbb{R}^{(N-1)\times(N-1)}$ is orthogonal.
We define new coordinates $\eta$ as follows
\begin{equation*}
\eta := Q^\top \nu \in\mathbb{R}^{N-1},
\end{equation*} and we define coupling parameters by
\begin{equation*}
q^{(j)} := Q^\top e_j.
\end{equation*}
Since the columns of $Q$ are the Laplacian eigenvectors $v^{[i]}=Qe_i$, we have
\begin{equation*}
q^{(j)}_i = (Q^\top e_j)_i = e_j^\top Q e_i = Q_{ji} = v^{[i]}_j.
\end{equation*}
Equivalently,
\begin{equation*}
q^{(j)} = \bigl(v^{[1]}_j,\dots,v^{[N-1]}_j\bigr)^\top,
\end{equation*}
i.e., $q^{(j)}$ collects, for each mode $i$, the value of the $i$-th Laplacian eigenvector at node $j$.

The nonlinear term $N(\nu)$ is quadratic in $\nu$ and in terms of the new variable $\eta$ it can be written as
\begin{equation*}
\widetilde N(\eta) := Q^\top N(Q\eta),\quad \text{so}\quad 
\widetilde N_\ell(\eta) = \sum_{p,q=1}^{N-1} C_{\ell pq}\,\eta_p\eta_q,
\end{equation*}
for coefficients $C_{\ell pq}\in\mathbb{R}$ given by
\begin{equation*}
C_{\ell pq}
= -\frac{1}{2}
\sum_{i,j=1}^{N-1}\kappa_{ij}\,\sin(\phi_j^*-\phi_i^*)\,
Q_{i\ell}\,(Q_{jp}-Q_{ip})(Q_{jq}-Q_{iq}).
\end{equation*}
In summary, in terms of $\eta$, equation \eqref{eq:nu2} transforms into
\begin{equation} 
\ddot{\eta}_i + \lambda_i\,\eta_i
= \varepsilon\Big(
     -\hat\alpha\,\dot{\eta}_i
     + \widetilde N_i(\eta)
     + \kappa\,q^{(k)}_i\,\sin\psi_N
  \Big)
  + \mathcal{O}(\varepsilon^2).
\label{eq:eta-component}
\end{equation}
where we write $\kappa =\kappa_{kN}$ for brevity.

\subsection{Dominant-mode ansatz}

We fix an index $\ell\in\{1,\dots,N-1\}$ such that
\begin{equation*}
q^{(k)}_\ell \neq 0,
\end{equation*}
and assume that the cluster response is dominated by this mode, in the sense that
\begin{equation}
\eta_\ell(t)=\mathcal{O}(1),\qquad
\eta_p(t) =\mathcal{O}(\varepsilon)\quad\text{for all }p\neq\ell.
\label{eq:LocalizationAssumption}
\end{equation}
Thus $\eta(t) = \eta_\ell(t) e_\ell + \mathcal{O}(\varepsilon) $ and therefore
\begin{equation*}
\nu(t)=Q\eta(t)=\eta_\ell(t)\,Q e_{\ell}+\mathcal{O}(\varepsilon),
\end{equation*}
so the perturbation of the synchronized cluster is, to leading order, aligned with the single Laplacian eigenmode $v^{[\ell]}$.

Using \eqref{eq:LocalizationAssumption}, we can then write
\begin{equation*} \label{eq:Ntilde-dominant-mode}
\widetilde N_\ell(\eta)
= C_{\ell}\,\eta_\ell^2 + \mathcal{O}(\varepsilon),
\end{equation*}
where
\begin{equation*}
C_{\ell} :=C_{\ell \ell \ell} 
= -\frac{1}{2}
\sum_{i,j=1}^{N-1}\kappa_{ij}\,\sin(\phi_{j}^*-\phi_{i}^*)\,
Q_{i\ell}\,(Q_{j\ell}-Q_{i\ell})^2
\end{equation*}
is the coefficient associated with the nonlinear self-interaction of mode $\ell$. Moreover,
\begin{equation} \label{eq:q-dominant-mode}
q^{(k)}\cdot\eta = q\,\eta_\ell + \mathcal{O}(\varepsilon),
\qquad
q := q^{(k)}_\ell.
\end{equation}
Taking the $\ell$-th component of \eqref{eq:eta-component} and using \eqref{eq:Ntilde-dominant-mode} together with the notation introduced in \eqref{eq:q-dominant-mode}, we obtain

\begin{align}
&\ddot{\eta}_\ell + \lambda_\ell\,\eta_\ell
= \varepsilon\Big(
     -\hat\alpha\,\dot{\eta}_\ell
     + C_{\ell}\,\eta_\ell^2
     + \kappa\,q\,\sin\psi_N
  \Big)
  + \mathcal{O}(\varepsilon^2),
\label{eq:reduced-eta}\\
&\ddot{\psi}_N
= \varepsilon\,(\hat P_N-\hat\alpha\,\Omega
  - \hat\alpha\,\dot{\psi}_N)
  + \mathcal{O}(\varepsilon^2).
\label{eq:reduced-psi}
\end{align}
For notational convenience, we set
\begin{equation*}
\lambda_\ell = \omega_0^2,\qquad \omega_0>0,
\end{equation*}
so $\omega_0$ is the natural frequency of the dominant cluster mode.
Finally, we introduce the following coordinates:
\begin{equation*}
\eta := \eta_\ell,\qquad
v := \dot{\eta}_\ell,\qquad
\psi := \psi_N,\qquad
w := \dot{\psi}_N.
\label{eq:first-order-vars}
\end{equation*}
In this way, we remove the second time derivatives and obtain a first-order system on $\mathbb{R}^2 \times \mathbb{R}/\mathbb{Z} \times \mathbb{R} $. Specifically, the reduced dynamics \eqref{eq:reduced-eta}-\eqref{eq:reduced-psi} becomes
\begin{align}
\dot{\eta}
&= v,
\label{eq:fo-eta}\\
\dot{v}
&= -\omega_0^2\,\eta
   + \varepsilon\Big(
       -\hat\alpha\,v
       + C_{\ell}\,\eta^2
       + \kappa\,q\,\sin\psi
     \Big)
   + \mathcal{O}(\varepsilon^2),
\label{eq:fo-v}\\
\dot{\psi}
&= w,
\label{eq:fo-psi}\\
\dot{w}
&= \varepsilon\,(\hat P_N-\hat\alpha\,\Omega
   - \hat\alpha\,w)
   + \mathcal{O}(\varepsilon^2).
\label{eq:fo-w}
\end{align}
The coefficient $q=q^{(k)}_\ell$ measures the participation of the attachment node $k$ in the $\ell$-th Laplacian mode. In particular, the solitary oscillator can directly force mode $\ell$ only if $q\neq 0$. We assume from now on that this is the case, since otherwise both coupling terms involving $\psi_N$ vanish in \eqref{eq:eta-component} at the displayed orders, and the $\ell$-th mode would not be directly driven by the solitary oscillator via the attachment at node $k$.

\printbibliography
\end{document}